\documentclass[a4paper,11pt]{amsart} 
\usepackage{graphicx}
\usepackage{mathrsfs}

\usepackage[ansinew]{inputenc}    

\usepackage{color}
\usepackage{amssymb,amscd}

\usepackage[all,cmtip]{xy}

\usepackage{tikz-cd}

\newtheorem{thm}{Theorem}[section]
\newtheorem{cor}[thm]{Corollary}
\newtheorem{lem}[thm]{Lemma}
\newtheorem{prop}[thm]{Proposition}
\newtheorem{conj}[thm]{Conjecture}

\theoremstyle{definition}
\newtheorem{defn}[thm]{Definition}

\theoremstyle{definition}
\newtheorem{rem}[thm]{Remark}

\theoremstyle{definition}

\def\C{\mathbb C}
\def\R{\mathbb R}

\def\km{\mathfrak m}
\def\cC{\mathscr C}
\def\cO{\mathcal O}

\def\dim{\operatorname{dim}}

\def\codim{\operatorname{codim}}

\def\id{\operatorname{id}}

\def\im{\operatorname{Im}}

\def\supp{\operatorname{supp}}
\def\coker{\operatorname{coker}}
\def\Tor{\operatorname{Tor}}
\begin{document}

\newcommand{\correct}[1]{{\color{red}{#1}}}
\newcommand{\red}[1]{{\color{red}{#1}}}

\author[J. F. Bobadilla, J.J. Nuño, G. Peñafort]{J. Fernández de Bobadilla, J. J.~Nu\~no-Ballesteros, G.~Peñafort-Sanchis}

\title[A Jacobian module for disentanglements]{A Jacobian module for disentanglements and applications to Mond's conjecture}

\address{(1) IKERBASQUE, Basque Foundation for Science, Maria Diaz de Haro 3, 48013, Bilbao, Spain
(2) BCAM, Basque Center for Applied Mathematics, Mazarredo 14, E48009 Bilbao, Spain}

\email{jbobadilla@bcamath.org}

\address{Departament de Geometria i Topologia,
Universitat de Val\`encia, Campus de Burjassot, 46100 Burjassot
SPAIN}

\email{Juan.Nuno@uv.es}

\address{Instituto de Matemática Pura e Aplicada, Estrada Dona Castorina 110, Rio de Janeiro, 22460-320, Brazil}

\email{penafort@impa.br}

\thanks{First author is supported by ERCEA 615655 NMST Consolidator Grant, MINECO by the project reference 
MTM2013-45710-C2-2-P, by the Basque Government through the BERC 2014-2017 program, 
by Spanish Ministry of Economy and Competitiveness MINECO: BCAM Severo Ochoa excellence accreditation SEV-2013-0323 and 
by Bolsa Pesquisador Visitante Especial (PVE) - Ciências sem Fronteiras/CNPq Project number:  401947/2013-0. 
The second and third authors are partially supported by DGICYT Grant MTM2015--64013--P. The third author is supported by Bolsa P\'os-Doutorado J\'unior - CNPq Project number 401947/2013-0.}

\subjclass[2000]{Primary 58K15; Secondary 58K40, 58K65} \keywords{Image Milnor number, $\mathscr A_e$-codimension, weighted homogeneous}

\begin{abstract} 
Given a germ $f:(\C^n,S)\to (\C^{n+1},0)$, we define an $\mathcal O_{n+1}$-module $M(f)$ with the property that  $\mathscr A_e$-$\codim(f)\le \dim_\C M(f)$, with equality if 
$f$ is weighted homogeneous. 
We also define a relative version $M_y(F)$ for unfoldings $F=(u,f_u)$, in such a way that $M_y(F)$ specialises to $M(f)$ when $u=0$. The main result is that if $(n,n+1)$ are 
nice dimensions, then $\dim_\C M(f)\ge \mu_I(f)$, with equality if and only if $M_y(F)$ is Cohen-Macaulay, for some stable unfolding $F$. Here, $\mu_I(f)$ denotes the image
Milnor number of $f$, so that if $M_y(F)$ is Cohen-Macaulay, then we have Mond's conjecture for $f$; furthermore, if $f$ is quasi-homogeneous Mond's conjecture for $f$ is
equivalent to the fact that $M_y(F)$ is Cohen-Macaulay. Finally, we observe that to prove Mond's conjecture, it is enough to 
prove it in a suitable family of examples.
\end{abstract}

\maketitle

\section{Introduction}

For any hypersurface with isolated singularity $(X,0)$, we have
$\tau(X,0)\le \mu(X,0)$, 
with equality if $(X,0)$ is weighted homogeneous. Here, $\tau(X,0)$ is the Tjurina number, that is, the minimal number of parameters in a versal deformation of $(X,0)$ and $\mu(X,0)$ is the Milnor number,
which is the number of spheres in the Milnor fibre of $(X,0)$. If $g\colon (\C^{n+1},0)\to(\C,0)$ is a function such that $g=0$ is a reduced equation of $(X,0)$, then we can compute both numbers in terms of $g$:
$$
\tau(X,0)=\dim_\C\frac{\mathcal O_{n+1}}{J(g)+\langle g\rangle},\quad \mu(X,0)=\dim_\C\frac{\mathcal O_{n+1}}{J(g)},
$$
where $\mathcal O_{n+1}$ is the local ring of holomorphic germs from $(\C^{n+1},0)$ to $\C$ and $J(g)$ denotes the Jacobian ideal generated by the partial derivatives of $g$. Thus, the initial statement about $\tau$ and $\mu$ becomes evident.
The Jacobian algebra deforms flatly over the parameter space of any deformation $g_t$ of $g$, it is known to encode crucial properties of the vanishing cohomology and its monodromy by its relation with the Brieskorn lattice and it is crucial
in the construction of Frobenius manifold structures in the bases of versal unfoldings. See the works of Brieskorn, Varchenko, Steenbrink, Scherk, Hertling and others, and the  
the books~\cite{AGV},~\cite{Kulk} and~\cite{Hert} 

Inspired by the previous inequality, D. Mond \cite{Mond:vanishing}  tried to obtain a result of the same nature in the context of 
singularities of mappings. He considered a hypersurface $(X,0)$ given by the image of a map germ 
$f\colon (\C^n,S)\to (\C^{n+1},0)$, with $S\subset \C^n$ a finite set and which has isolated instability under the action of the Mather group $\mathscr A$ of biholomorphisms in the source and the target. The Tjurina number has to be substituted by the $\mathscr A_e$-codimension, which is equal to the minimal number of parameters in an $\mathscr A$-versal deformation of $f$. Instead of the Milnor fibre, one considers the disentanglement, that is, the image $X_u$ of a stabilisation $f_u$ of $f$. Then, $X_u$ has the homotopy type of a wedge of spheres and Mond defined the image Milnor number $\mu_I(f)$ as the number of such spheres. Note that, outside the range of Mather's dimensions, some germs do not admit a stabilisation. Then, he stated the following conjecture:

\begin{conj} Let $f\colon (\C^n,S)\to(\C^{n+1},0)$ be an $\mathscr A$-finite map germ, with $(n,n+1)$ nice dimensions. Then,
$$
\mathscr A_e\text{-}\codim(f)\le \mu_I(f),
$$
with equality if $f$ is weighted homogeneous.
\end{conj}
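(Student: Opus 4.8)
The plan is to transport the mechanism of the hypersurface case, recalled in the introduction, to the setting of map germs. There, a single object --- the Jacobian algebra --- computes both the analytic invariant $\tau$ and the topological invariant $\mu$, and the inequality $\tau\le\mu$ is forced because $\cO_{n+1}/(J(g)+\langle g\rangle)$ is a quotient of $\cO_{n+1}/J(g)$. I would therefore look for an $\cO_{n+1}$-module $M(f)$ that plays the same dual role: on one side its length should dominate $\mathscr A_e\text{-}\codim(f)$, and on the other side, through a relative version over the base of an unfolding, it should be tied to the image Milnor number.

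First I would build $M(f)$ from the data of $f$, most naturally from a presentation of the pushforward $f_*\cO_n$ as an $\cO_{n+1}$-module together with the defining equation $g$ of the image and the $\coker$ and $\Tor$ terms measuring the failure of $f$ to be an immersion. The construction must be arranged so that $T\mathscr A_e f$ appears inside the relevant module and $\theta(f)/T\mathscr A_e f$ is realised as a quotient of $M(f)$; this gives at once the inequality $\mathscr A_e\text{-}\codim(f)\le\dim_\C M(f)$. For weighted homogeneous $f$ I would use the Euler vector field to obtain the reverse inequality: homogeneity forces the generators distinguishing $M(f)$ from the codimension quotient into the ``Jacobian part'', collapsing them exactly as the Euler relation puts $g$ into $J(g)$, so that $\mathscr A_e\text{-}\codim(f)=\dim_\C M(f)$.

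Next I would pass to a stable unfolding $F=(u,f_u)$ and its relative module $M_y(F)$, designed to specialise to $M(f)$ at $u=0$. Regarding $M_y(F)$ as a family over the parameter space, upper semicontinuity of fibre length gives $\dim_\C M(f)\ge(\text{generic fibre length})$, and I would identify this generic length with $\mu_I(f)$ by matching it, point by point over the singularities of the disentanglement $X_u$, with the known computation of the image Milnor number as a conserved length of vanishing cohomology. In the nice dimensions, where stabilisations exist and $X_u$ is a wedge of $n$-spheres, this yields $\dim_\C M(f)\ge\mu_I(f)$. Combined with the codimension inequality, Mond's bound $\mathscr A_e\text{-}\codim(f)\le\mu_I(f)$ then follows the moment the special and generic fibre lengths agree.

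That agreement is the crux, and the step I expect to be the real obstacle. Over a regular base the special and generic fibre lengths of $M_y(F)$ coincide precisely when $M_y(F)$ is Cohen-Macaulay, equivalently flat over the base, so conservation of length holds without loss exactly in that case. The whole argument thus reduces Mond's conjecture to the Cohen-Macaulayness of $M_y(F)$, and in the quasi-homogeneous case --- where the codimension inequality is already an equality --- Mond's equality becomes \emph{equivalent} to this property. I do not expect to settle Cohen-Macaulayness in general; the honest outcome is this reduction, to be complemented by verifying the property in suitable families of examples.
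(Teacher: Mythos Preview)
Your proposal is not a proof of the conjecture --- and neither is the paper's treatment, since the statement remains open. What you outline is exactly the reduction the paper carries out: build $M(f)$ with $\mathscr A_e\text{-}\codim(f)\le\dim_\C M(f)$ (equality when $f$ is weighted homogeneous, via the Euler relation forcing $g\in J(g)$), construct a relative $M_y(F)$ specialising to $M(f)$, bound $\dim_\C M(f)\ge\mu_I(f)$ by upper semicontinuity of fibre length, and observe that equality holds precisely when $M_y(F)$ is Cohen--Macaulay over the base. Your final paragraph is Theorems~\ref{main1} and~\ref{main2} in prose.

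One point where your localisation is off: you say the generic fibre length is matched with $\mu_I(f)$ ``point by point over the singularities of the disentanglement $X_u$''. In the paper's argument the contribution does \emph{not} sit over $X_u$. In the nice dimensions and for generic $u$, the stalks of $M_y(F)$ at points of $X_u$ vanish (stable germs have $M=0$); the entire generic length is concentrated at the isolated critical points of $g_u$ lying \emph{off} the image, where the module reduces to the ordinary Jacobian algebra $\cO_{B_\epsilon,p}/J(g_u)$. It is then Siersma's theorem (Theorem~\ref{Siersma}) that identifies the sum of those Milnor numbers with $b_n(X_u)=\mu_I(f)$. This off-image localisation is the mechanism by which $M_y(F)$ actually sees the vanishing cycles, and it is worth getting right.
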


The conjecture is known to be true for $n=1,2$ (see \cite{JS,Mond:vanishing,Mond:bent wires}) but it remains open until now for $n\ge 3$. There is a related result for map germs $f\colon (\C^n,S)\to(\C^p,0)$ with $n\ge p$, where one considers $\Delta$ the discriminant of $f$ instead of its image and defines the discriminant Milnor number $\mu_\Delta(f)$ in the same way. Damon and Mond showed in \cite{DM} that if $(n,p)$ are nice dimensions, then $\mathscr A_e$-$\codim(f)\le \mu_\Delta(f)$ with equality if $f$ is weighted homogeneous. There are many papers in the literature with related results, partial proofs and examples in which the conjecture has been checked. We refer to \cite{Mond:Edinburgh} for a recent account of these results. 

Going back to hypersurface singularities $g$, but now with non-isolated singularities, it is not anymore clear the relation of the Jacobian algebra of $g$  with the vanishing cohomology. 
Moreover it is apparent in easy examples that the Jacobian algebra does not
deform flatly in unfoldings. In fact the possibility of studying the vanishing cohomology via deformations that simplify the critical set (in the same vein that Morsifications do for isolated singularities) does not exist in general. However,
for restricted classes of singularities Siersma, Pellikaan, Zaharia, Nemethi, Marco-Buzun\'ariz and the first author have developed methods that allow to split the vanishing cohomology of a non-isolated singularity in two direct summands
according with the geometric properties of a deformation $g_u$ of $g$ which plays the role of a Morsification (one may find a nice survey in~\cite{Siersma3}).
The first is a free vector space contributing to the middle dimension cohomology of the Milnor fibre, with as much dimension as the number of Morse points that appear away from the zero set of $g_u$ ($u\neq 0$), 
the second is determined by the non-isolated singularities of the zero-set of $g_u$ ($u\neq 0$).

Given $f\colon (\C^n,S)\to(\C^{n+1},0)$, an $\mathscr A$-finite map germ, we consider a generic $1$-parameter deformation $f_u$ of it (a stabilisation). Let $g_u$ be the equation defining the image of $f_u$. It turns out that the deformation 
$g_u$ is suitable to split the vanishing cohomology of $g$ in two direct summands, as explained in the paragraph above, and that the first summand corresponds with the cohomology of the image $X_u$, whose rank is the image Milnor number.
The main novelty of this paper is the definition of an Artinian $\cO_{n+1}$ -module $M(f)$, which satisfies
$$\dim_\C M(f)=\mathscr A_e\text{-}\codim(f)+\dim_\C((g)+J(g)/J(g))$$
and, in the nice dimensions, this dimension upper bounds the image Milnor number. Moreover we define a relative version $M_y(F)$ of the module for unfoldings $F$ of $f$, 
and we prove that when we specialise the parameter the relative module specialises to the original $M(f)$. 

The first main result of this paper is Theorem~\ref{main1}, which implies that the dimension of $M(f)$ equals the image Milnor number 
if and only if $M_y(F)$ is flat over the base of the unfolding. We also prove that this is equivalent to the flatness of the Jacobian algebra over the base of the unfolding.
Thus, under the flatness condition, $M(f)$ is expected to play the role of the Milnor algebra for isolated singularities, in the sense of encoding the first direct summand of the vanishing
cohomology, which is the only one present for isolated singularities. It is very interesting to investigate whether the relation
of the vanishing cohomology of isolated singularities with the Jacobian algebra explained admit a generalisation to a relation between the first direct summand of the vanishing cohomology of $g$ and the module $M(f)$.

The second main result (Theorem~\ref{main2}) says that the flatness of $M_y(F)$ implies Mond's conjecture for $f$, and it is equivalent to it if $f$ is weighted homogeneous. In Theorems~\ref{reduction1}~and~\ref{reduction2} we derive the surprising
consequence that in order to settle Mond's conjecture in complete generality it is enough to prove it for a series of examples of increasing multiplicity.

In Section~\ref{algoritmo} we derive formulas to compute the module the modules $M(f)$ and $M_y(F)$ which are well suited for computer algebra programs and also lead to new formulas for the $\mathscr A_e$-codimension: see Corollary~\ref{nf0} and 
Remark~\ref{nf}.

The authors would like to thank Duco van Straten, David Mond and Craig Huneke for useful conversations.

\section{The $\mathscr A_e$-codimension and the image Milnor number}\label{section:image}

We recall the definition of codimension of a map germ with respect to the Mather $\mathscr A$ group. Let $f\colon (\C^n,S)\to (\C^p,0)$ be any holomorphic map {multi-}germ.
We denote by $\cO_n=\mathcal O_{\C^n,S}$ and $\cO_p=\mathcal O_{\C^p,0}$ the rings of holomorphic function germs in the source and the target respectively.
We also denote by $\theta_n=\theta_{\C^n,S}$ and $\theta_p=\theta_{\C^p,0}$ the corresponding modules of germs of vector fields and by $\theta(f)$ the module of germs of vector fields along $f$. 
Then, we have two associated morphisms: $tf\colon \theta_{n}\to\theta(f)$ given by $tf(\eta)=df\circ\eta$ and $\omega f\colon \theta_{p}\to\theta(f)$ given by $\omega(\xi)=\xi\circ f$. The $\mathscr A_e$-codimension of $f$ is defined as:
$$
\mathscr A_e\text{-}\codim(f)=\dim_\C \frac{\theta(f)}{tf(\theta_{n})+\omega f(\theta_p)}.
$$
We say that $f$ is $\mathscr A$-stable (resp. $\mathscr A$-finite) if the $\mathscr A_e$-codimension is zero (resp. finite).

By an $r$-parameter unfolding of a map {multi-}germ $f\colon (\C^n,S)\to(\C^p,0)$ we mean another {multi-}germ
$F\colon (\C^r\times\C^n,\{0\}\times S)\to(\C^r\times\C^p,0)$ given by $F(u,x)=(u,f_u(x))$ and such that $f_0=f$.
It was proved by Mather \cite{MatherII} that $f$ is $\mathscr A$-stable if and only if any unfolding $F$ of $f$ is trivial. This means that there exist $\Phi$ and $\Psi$ unfoldings of the identity in $(\C^r\times\C^n,\{0\}\times S)$ and $(\C^r\times\C^p,0)$, respectively, such that $\Psi\circ F\circ\Phi^{-1}$ is the constant unfolding $\id\times f$.

We present now a result due to Mond which gives a way to compute the $\mathscr A_e$-codimension in the case $p=n+1$ in terms of a defining equation of the image of $f$. We need to introduce some notation. 

From now on, we will assume that $f\colon (\C^n,S)\to (\C^{n+1},0)$ is finite and generically one-to-one and denote by
$(X,0)\subset(\C^{n+1},0)$ its image. The restriction $\bar{f}\colon (\C^n,S)\to(X,0)$ is the normalization map, so the
induced morphism 
$\bar{f}^*\colon \mathcal O_{X,0}\to \mathcal O_n$ is a monomorphism and we will consider $\mathcal O_{X,0}$ as a subring
of $\mathcal O_n$. Thus, we have a commutative diagram:
$$
\begin{tikzcd}
\mathcal O_{n+1} \arrow{r}{f^*} \arrow[twoheadrightarrow]{dr}{\pi}
& \cO_n\\
& \mathcal O_{X,0} \arrow[hookrightarrow]{u}{i}
\end{tikzcd}
$$
where $\pi$ is the epimorphism induced by the inclusion $(X,0)\subset(\C^{n+1},0)$. Here, we consider both $\mathcal O_{X,0}$ and $\mathcal O_n$ as $\mathcal O_{n+1}$-modules via the corresponding morphisms. Finally, let $g\in \mathcal O_{n+1}$ be such that $g=0$ is a reduced equation of $(X,0)$ and denote by $J(g)\subset\mathcal O_{n+1}$ the Jacobian ideal of $g$.

\begin{lem}\label{IsoMondACodim}\cite[Proposition 2.1]{Mond:vanishing} 
Let $f\colon (\C^n,S)\to (\C^{n+1},0)$ be $\mathscr A$-finite, with $n\ge 2$. Then,
$$
\mathscr A_e\text{-}\codim(f)=\dim_\C\frac{J(g)\cdot \mathcal O_n}{J(g)\cdot \mathcal O_{X,0}}.
$$
\end{lem}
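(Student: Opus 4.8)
The plan is to realise the right-hand side through contraction with the pulled-back differential of $g$. Write $\theta(f)=\mathcal O_n\langle \partial/\partial y_1,\dots,\partial/\partial y_{n+1}\rangle$ in the standard basis coming from the target coordinates, and define an $\mathcal O_n$-linear map $\lambda\colon \theta(f)\to\mathcal O_n$ by $\lambda\left(\sum_i c_i\,\partial/\partial y_i\right)=\sum_i c_i\, f^*(\partial g/\partial y_i)$. Three elementary facts are then available. First, the image of $\lambda$ is exactly the ideal $J(g)\cdot\mathcal O_n$ generated by the $f^*(\partial g/\partial y_i)$. Second, since $\xi(g)$ runs through $J(g)$ as $\xi$ runs through $\theta_{n+1}$ and $\lambda(\omega f(\xi))=f^*(\xi(g))$, the map $\lambda$ carries $\omega f(\theta_{n+1})$ onto $f^*(J(g))=J(g)\cdot\mathcal O_{X,0}$. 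Third, differentiating the identity $g\circ f\equiv 0$ (valid because the image of $f$ lies in $X=\{g=0\}$) with respect to each source coordinate gives, by the chain rule, that every column of the Jacobian matrix of $f$ lies in $\ker\lambda$; that is, $tf(\theta_n)\subseteq\ker\lambda$.

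With these in hand the lemma reduces to the single assertion that $\theta_n\xrightarrow{tf}\theta(f)\xrightarrow{\lambda}\mathcal O_n$ is exact in the middle, i.e. $\ker\lambda=tf(\theta_n)$. Granting this, $\lambda$ induces an isomorphism $\theta(f)/tf(\theta_n)\xrightarrow{\sim}J(g)\cdot\mathcal O_n$ under which the image of $\omega f(\theta_{n+1})$ corresponds to $J(g)\cdot\mathcal O_{X,0}$; passing to quotients yields $\theta(f)/(tf(\theta_n)+\omega f(\theta_{n+1}))\cong J(g)\cdot\mathcal O_n/J(g)\cdot\mathcal O_{X,0}$, and comparing $\dim_\C$ of the two sides gives the statement. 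So the whole proof collapses onto the exactness claim, which I expect to be the main obstacle.

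To prove $\ker\lambda=tf(\theta_n)$ I would localise. On the open set $U\subseteq(\C^n,S)$ where $f$ is an immersion, $f$ is locally a closed embedding onto a smooth hypersurface, so one of the $f^*(\partial g/\partial y_i)$ is a unit and a Nakayama argument identifies $\ker\lambda$ with the free rank-$n$ summand $tf(\theta_n)$; thus the inclusion $tf(\theta_n)\subseteq\ker\lambda$ is an equality over $U$. By the Mather--Gaffney criterion, $\mathscr A$-finiteness makes $f$ stable off the finite set $S$, and a stable germ $(\C^n,S)\to(\C^{n+1},0)$ fails to be an immersion only along a set of codimension $\ge 2$ (its cross-cap locus); together with the fact that $S$ itself has codimension $n\ge 2$, this shows that the complement of $U$ in the source has codimension $\ge 2$. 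Consequently $\ker\lambda$ and $tf(\theta_n)$ have the same localisation at every height-one prime of the regular, hence normal, ring $\mathcal O_n$. Since $tf(\theta_n)$ is free, it equals the intersection of its localisations at the height-one primes; therefore any $x\in\ker\lambda$, lying in $(tf(\theta_n))_{\mathfrak p}$ for every such $\mathfrak p$, already lies in $tf(\theta_n)$. This gives $\ker\lambda=tf(\theta_n)$.

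The delicate point, and the reason the hypothesis $n\ge 2$ is genuinely needed, is exactly this last normality step: the argument requires the non-immersion locus to have codimension $\ge 2$, so that agreement at height-one primes forces agreement everywhere. For $n=1$ the ring $\mathcal O_1$ is a discrete valuation ring, where this imposes no constraint beyond torsion-freeness, and $\ker\lambda=tf(\theta_1)$ can fail: for the plane cusp $t\mapsto(t^2,t^3)$ one has $\ker\lambda=\mathcal O_1\cdot(2,3t)$ strictly containing $tf(\theta_1)=\mathcal O_1\cdot(2t,3t^2)$, and the extra syzygy is not absorbed by $\omega f(\theta_2)$, so the formula breaks down. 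Thus the bulk of the work lies in the codimension estimate on the non-immersion locus and the reflexivity argument; once $\ker\lambda=tf(\theta_n)$ is secured, the identification of dimensions is purely formal.
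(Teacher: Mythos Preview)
The paper does not give its own proof of this lemma; it merely cites Mond's original article \cite{Mond:vanishing} and remarks that the argument there, written for monogerms, extends to multigerms. Your argument is essentially Mond's: define the evaluation map $\lambda$, identify its image and the image of $\omega f(\theta_{n+1})$ under it, and reduce everything to the exactness $\ker\lambda=tf(\theta_n)$, which is indeed the heart of the matter and the place where the hypothesis $n\ge 2$ enters.

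There is one inaccuracy worth flagging. In justifying exactness on the immersion locus you assert that $f$ is locally an embedding onto a \emph{smooth} hypersurface, so that some $f^*(\partial g/\partial y_i)$ is a unit. This fails at immersive double points: if two sheets of $X$ meet at $f(p)$ then $f(p)\in\operatorname{Sing}(X)$ and every $f^*(\partial g/\partial y_i)$ lies in the maximal ideal. The source double-point locus has codimension one, so such primes are exactly the ones your height-one test must handle. The fix is easy and your reflexivity step survives unchanged: at a height-one prime $\mathfrak p$ the non-immersion locus (codimension $\ge 2$) is invisible, so $tf(\theta_n)_{\mathfrak p}$ is a rank-$n$ free direct summand of $\theta(f)_{\mathfrak p}$; since the $f^*(\partial g/\partial y_i)$ are not all zero in the domain $(\mathcal O_n)_{\mathfrak p}$ (they are nonzero in $\mathcal O_n$ because $g$ is reduced and $f$ is finite), the restriction of $\lambda_{\mathfrak p}$ to any rank-one complement is injective, whence $(\ker\lambda)_{\mathfrak p}=tf(\theta_n)_{\mathfrak p}$. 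With this small correction the proof is sound and aligns with the cited one.
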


Note that the proof of this lemma given in \cite{Mond:vanishing} is only for monogerms, but a careful revision of the proof shows that it also works for multigerms. Note also that the lemma is not true for $n=1$. In fact, in that case (see \cite{Mond:bent wires}):
$$
\mathscr A_e\text{-}\codim(f)=\dim_\C\frac{J(g)\cdot \mathcal O_{1}}{J(g)\cdot \mathcal O_{X,0}}+\dim_C \frac{\mathcal O_{1}}{\langle f'_1,f'_2\rangle}.
$$

Next, we recall the definition of image Milnor number. Consider any $r$-parameter unfolding $F(u,x)=(u,f_u(x))$ of $f\colon (\C^n,S)\to(\C^{n+1},0)$. Denote by $(\mathcal X,0)$  the image hypersurface of $F$ in $(\C^r\times \C^{n+1},0)$, and by $X_u$ the fibre of $\mathcal X$ over $u\in\C^r$. We fix a small enough
representative  
$$F\colon W\to T\times B_\epsilon,$$
where $W,T,B_\epsilon$ are open neighbourhoods of $S$ and the origin in $\C^{r+n},\C^r,\C^{n+1}$, respectively, such that:
\begin{enumerate}
\item $F$ is finite (i.e., closed and finite-to-one), 
\item $F^{-1}(0)=\{0\}\times S$,
\item $B_\epsilon$ is a Milnor ball for the hypersurface $X_0\subset\C^{n+1}$
\item $T$ is small enough so that the intersection $X_u\cap\partial B_\epsilon$ of the hypersurface with the Milnor sphere is topologically trivial over all $u\in T$.
\end{enumerate}

In order to understand the topology of $X_u\cap B_\epsilon$ we use the following 
general result due to Siersma:

\begin{thm} \cite{Siersma}\label{Siersma}
Let $g\colon (\C^{n+1},0)\to(\C,0)$ define a reduced hypersurface $(X_0,0)$, not necessarily with isolated singularity, 
and let $G\colon (\C^r\times\C^{n+1},0)\to(\C,0)$ be a deformation of $g$ such that for all $u$, 
\begin{enumerate}
\item $\{g_u=0\}$ is topologically trivial over the Milnor sphere $\partial B_\epsilon$, and
\item all the critical points of $g_u$ which are not in $X_u=g_u^{-1}(0)\cap B_\epsilon$ are isolated.
\end{enumerate}
Then, $X_u\cap B_\epsilon$ is homotopy equivalent to a wedge of $n$-spheres and the number of such $n$-spheres is equal to
$$
\sum_{y\in B_\epsilon\setminus X_u}\mu(g_u;y),
$$
{where $\mu(g_u;y)$ denotes the Milnor number of the function $g_u$ at the point $y$.}
\end{thm}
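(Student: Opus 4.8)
The plan is to study the pair $(B_\epsilon,X_u)$, where $X_u=g_u^{-1}(0)\cap B_\epsilon$, by means of Morse theory for the holomorphic function $g_u$, concentrating all of the vanishing topology at the isolated critical points lying off the special fibre.

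First I would prove that a thin Milnor tube retracts onto the special fibre. For $0<\eta\ll\epsilon$, set $T_\eta=g_u^{-1}(\bar{D}_\eta)\cap B_\epsilon$, where $\bar{D}_\eta\subset\C$ is the closed disc of radius $\eta$. Choosing $\eta$ smaller than the modulus of every nonzero critical value of $g_u|_{B_\epsilon}$, the only critical points of $g_u$ inside $T_\eta$ are those contained in $X_u$. Using hypothesis (1) to build a controlled (Thom--Mather) vector field transverse to the levels $g_u^{-1}(\partial D_t)$ and tangent to $\partial B_\epsilon$, one integrates it to a deformation retraction of $T_\eta$ onto $X_u$; in particular $X_u\hookrightarrow T_\eta$ is a homotopy equivalence. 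Since $B_\epsilon$ is contractible, the long exact sequence of the pair gives $\tilde H_q(X_u)\cong H_{q+1}(B_\epsilon,X_u)\cong H_{q+1}(B_\epsilon,T_\eta)$ for all $q$.

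Next I would compute $H_\ast(B_\epsilon,T_\eta)$ by building $B_\epsilon$ up from $T_\eta$ as the radius $t$ grows from $\eta$ to a value $R$ with $g_u(B_\epsilon)\subset\bar{D}_R$. Away from the finitely many nonzero critical values $c_1,\dots,c_k$ of $g_u|_{B_\epsilon}$, hypothesis (1) again makes $g_u$ a locally trivial fibration over $\bar{D}_t\setminus\{0\}$, so the homotopy type of $g_u^{-1}(\bar{D}_t)\cap B_\epsilon$ changes only as $t$ crosses a modulus $|c_i|$. By hypothesis (2), the critical points with value $c_i\neq0$ are isolated, and the local conic structure of an isolated hypersurface singularity identifies the change of topology: near such a point $y$ the local total space is contractible while the local fibre is homotopy equivalent to a wedge of $\mu(g_u;y)$ copies of $S^n$, so passing $y$ attaches $\mu(g_u;y)$ cells of dimension $n+1$ in the relative CW structure. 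Summing over all critical points off $X_u$ shows that $H_{q+1}(B_\epsilon,T_\eta)$ is free, concentrated in degree $q+1=n+1$, of rank $\sum_{y\in B_\epsilon\setminus X_u}\mu(g_u;y)$. Combined with the previous step, $\tilde H_q(X_u)$ vanishes for $q\neq n$ and is free of this rank for $q=n$.

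Finally I would upgrade this homological statement to the asserted homotopy equivalence. As a closed analytic subset of a Stein manifold, $X_u$ is Stein of complex dimension $n$, hence has the homotopy type of a CW complex of real dimension at most $n$; together with the simple connectivity of $X_u$ in the relevant range, a Hurewicz and Whitehead argument shows that a simply connected CW complex of dimension $\le n$ whose reduced homology is free and concentrated in degree $n$ is homotopy equivalent to a wedge of $n$-spheres, necessarily $\sum_{y\in B_\epsilon\setminus X_u}\mu(g_u;y)$ of them. The main obstacle I anticipate is the global handle decomposition with boundary control: verifying that hypothesis (1) genuinely produces a stratified isotopy that neither escapes through $\partial B_\epsilon$ nor interacts with the non-isolated critical locus sitting inside $T_\eta$, so that the relative CW structure on $(B_\epsilon,T_\eta)$ is governed solely by the interior isolated critical points. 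The identification of $\mu(g_u;y)$ with the number of attached $(n+1)$-cells, resting on Milnor's local fibration, is the conceptual core but becomes routine once this global picture is secured.
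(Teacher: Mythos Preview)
The paper does not prove this theorem. Theorem~\ref{Siersma} is stated with an explicit attribution to Siersma's paper~\cite{Siersma} and is invoked as a black box (most prominently in the proof of Theorem~\ref{Cohen-Macaulay}); no argument for it appears anywhere in the present article. There is therefore no ``paper's own proof'' against which to compare your proposal.

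For what it is worth, your outline is essentially the strategy of Siersma's original argument: retract a thin tube $T_\eta$ onto the special fibre, compute $H_*(B_\epsilon,T_\eta)$ by excising small balls around the isolated critical points off $X_u$ and invoking the local Milnor fibration at each, and then upgrade to a homotopy statement via the CW bound for Stein spaces. One place worth tightening is the simple-connectivity input in your final step. Rather than asserting $\pi_1(X_u)=1$ directly and appealing to Hurewicz--Whitehead, it is cleaner to observe that your handle picture exhibits the contractible ball $B_\epsilon$ as $X_u$ with $(n{+}1)$-cells attached; since attaching $(n{+}1)$-cells does not change $\pi_1$ when $n\ge 2$, one reads off $\pi_1(X_u)=\pi_1(B_\epsilon)=1$ for free, and the wedge-of-spheres conclusion then follows. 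The case $n=1$ is handled separately by the elementary fact that a connected $1$-complex is a wedge of circles.
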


\begin{defn}
Assume $r=1$. Given a representative $F\colon W\to T\times B_\epsilon$ as above, we say that $F$ is a \emph{stabilisation} if for any 
 $u\in T\setminus\{0\}$ and any point $y\in X_u\cap B_\epsilon$ the multigerm of $f_u$ at $y$ is $\mathscr A$-stable.
\end{defn}

It is well known that every map $f$ admits a stabilisation if $(n,n+1)$ are nice dimensions in the sense of Mather \cite{MatherVI}. As an application of Siersma's previous result, Mond proves the following theorem in \cite{Mond:vanishing}. Again the proof of the theorem given in \cite{Mond:vanishing} is only for monogerms, but it is easy to check that the proof also works for multigerms.

\begin{thm}\cite{Mond:vanishing} \label{wedge}
Let $f\colon (\C^n,S)\to(\C^{n+1},0)$ be $\mathscr A$-finite with $(n,n+1)$ nice dimensions and let $F$ be a stabilisation of $f$. Then, for any  $u\in T\setminus\{0\}$, the image of $f_u$ has the homotopy type of a wedge of $n$-spheres.  Moreover, the number of such $n$-spheres is independent of the parameter $u$ and on the stabilisation $F$.
\end{thm}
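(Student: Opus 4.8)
The plan is to deduce the theorem from Siersma's Theorem~\ref{Siersma} applied to a defining equation of the total image. First I would choose a reduced equation $G\colon(\C\times\C^{n+1},0)\to(\C,0)$ of the image hypersurface $(\mathcal X,0)$ of $F$, and set $g_u=G(u,-)$, so that $g_u$ is a reduced equation of $X_u$; here one checks, using that $F$ is finite and generically one-to-one, that the fibre-wise restrictions of $G$ remain reduced. With this in place, the homotopy statement is exactly the conclusion of Theorem~\ref{Siersma}, \emph{provided} the two hypotheses of that theorem hold for the deformation $G$ of $g=g_0$. Hypothesis~(1), triviality of $\{g_u=0\}$ over the Milnor sphere $\partial B_\epsilon$, is precisely condition~(4) in the choice of representative $F\colon W\to T\times B_\epsilon$. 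So the whole theorem reduces to verifying Siersma's hypothesis~(2), together with the two independence statements.

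The main obstacle is hypothesis~(2): for $u\neq 0$ the critical points of $g_u$ lying off $X_u$ must be isolated. The critical set of $g_u$ splits into the singular locus of the hypersurface $X_u$ (the points of $X_u$ where $dg_u=0$) and the critical points $y$ with $g_u(y)\neq 0$. The first part is harmless for Siersma, since the hypothesis only concerns points outside $X_u$; moreover, since $F$ is a stabilisation, every multigerm of $f_u$ at a point of $X_u$ is $\mathscr A$-stable, so $X_u$ is a locally stable image and its singular locus is well understood. The delicate points are those off $X_u$: stability of $f_u$ imposes no condition there, so isolatedness does not follow from the definition of stabilisation alone and must come from genericity of the one-parameter family. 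I would argue this by working inside an $\mathscr A_e$-versal unfolding of $f$: a sufficiently generic one-parameter stabilisation corresponds to a generic line through the origin in the versal base, and for such a family $g_u$ acquires, away from $X_u$, only nondegenerate (Morse) critical points, which are automatically isolated and finite in number. This is the technical heart, and the place where the finiteness of stable $\mathscr A$-types in the nice dimensions and a transversality--Sard argument in the versal base are used.

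Finally, for the independence statements I would proceed as follows. Independence of the parameter $u$ is the easiest: over the connected set $T\setminus\{0\}$ the family $\{X_u\}$ is a locally trivial topological fibration, since the germs of $f_u$ along $X_u$ realise only finitely many stable $\mathscr A$-types and the family is equisingular in the Thom--Mather sense, so the diffeomorphism type of $X_u\cap B_\epsilon$, hence the number of spheres, is constant. For independence of the stabilisation, I would embed $f$ into an $\mathscr A_e$-versal unfolding with base $(\C^d,0)$. The locus of parameters for which the corresponding germ fails to be stable is a proper analytic subset, hence of complex codimension $\geq 1$, so its complement is connected and the stable fibres over it are all diffeomorphic. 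By versality, the disentanglement $X_u$ of any stabilisation is diffeomorphic to such a generic fibre; therefore the homotopy type and the number of spheres depend only on $f$. This last step also upgrades the wedge-of-spheres conclusion from the generic stabilisations, for which Siersma's theorem applies directly, to an arbitrary stabilisation $F$, completing the proof.
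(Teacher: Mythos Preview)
The paper does not give its own proof of this theorem: it is quoted from \cite{Mond:vanishing}, with only the remark that ``as an application of Siersma's previous result, Mond proves the following theorem'' and that the argument extends to multigerms. So there is no detailed proof in the paper to compare against, only the indication that the result is a consequence of Theorem~\ref{Siersma}. Your overall plan---apply Siersma's theorem to a reduced defining equation $G$ of the total image, check the two hypotheses, and then argue independence via a versal unfolding---is exactly the route the paper points to.

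One comment on your treatment of hypothesis~(2). You assert that isolatedness of the critical points of $g_u$ off $X_u$ ``does not follow from the definition of stabilisation alone and must come from genericity'', and you then route the general case through a generic stabilisation plus a connectedness argument in the versal base. This works, but it is more circuitous than necessary. In fact isolatedness holds for \emph{every} stabilisation once $f$ is $\mathscr A$-finite in the nice dimensions: the relative critical locus of $G$ away from $\mathcal X$ is the support of $(\langle G\rangle+J_y(G))/J_y(G)$, and one can show (this is essentially the content of Remark~\ref{nice} and the later module-theoretic machinery) that this is finite over the base, because at every stable point the local image is, up to $\mathscr A$-equivalence, weighted homogeneous and hence satisfies $g\in J(g)$. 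So the off-image critical points of $g_u$ are finite for all $u$, not just generic $u$, and Siersma applies directly to any stabilisation. Your detour through genericity is not wrong, but it hides the reason the hypothesis holds. The independence arguments you sketch (local triviality over the complement of the bifurcation set, and connectedness of that complement in a versal base) are the standard ones.
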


\begin{rem}
 As it is mentioned in \cite{Mond:vanishing}, if $(n,n+1)$ are not nice dimensions, the theorem is still true if we substitute a stabilisation by a $C^0$-stabilisation. This means that for any $u\in T\setminus\{0\}$ and any point $y\in X_u\cap B_\epsilon$, the multigerm of $f_u$ at $y$ is $C^0$-$\mathscr A$-stable (that is, its jet extension is multitransverse to the canonical stratification of Mather of the jet space). Since a $C^0$-stabilisation of $f$ always exists (by the Thom-Mather transversality theorem), the image Milnor number can be defined in general by taking a $C^0$-stabilisation instead of a stabilisation.
\end{rem}

\begin{defn} Let $f\colon (\C^n,S)\to(\C^{n+1},0)$ be $\mathscr A$-finite. The image $X_u$ of a $C^0$-stabilisation $f_u$ of $f$ is called the \emph{disentanglement} and number of $n$-spheres in $X_u$ is called the \emph{image Milnor number} of $f$ and is denoted by $\mu_I(f)$.
\end{defn}

\begin{rem}
Sometimes, it is more interesting to work with a stable unfolding of $f$ instead of a stabilisation. This means an $r$-parameter unfolding $F$ which is $\mathscr A$-stable as a map germ. If $f$ is finite, then it has finite singularity type and hence, it always admits a stable unfolding (see \cite{MatherIII}). 

Then, there exists a proper closed analytic subset $\mathcal B\subset T$ such that for any $u\in T\setminus\mathcal B$ and any point $y\in X_u\cap B_\epsilon$ the multigerm of $f_u$ at $y$ is $C^0$-$\mathscr A$-stable (or $\mathscr A$-stable if $(n,n+1)$ are nice dimensions). This subset $\mathcal B$ is known as the \emph{bifurcation set} of $F$. It follows that for any $u\in T\setminus\mathcal B$, the image of $f_u$ has the homotopy type of a wedge of $n$-spheres and the number of such $n$-spheres is equal to $\mu_I(f)$.

In fact, we only have to take a curve $C\subset T$ joining $u$ to the origin and such that $C\cap \mathcal B=\{0\}$. If $C$ is parametrised by $\gamma(t)$, then $H(t,x)=(t,f_{\gamma(t)}(x))$ defines a $C^0$-stabilisation of $f$ such that $f_{\gamma(t)}=f_u$ for some $t\ne0$.
\end{rem}

\section{The module $M(f)$}


We denote by $C(f)$ the conductor ideal of $\mathcal O_{X,0}$ in $\mathcal O_n$ and by $\mathscr C(f)$ its inverse image through $\pi\colon \cO_{n+1}\to \cO_{X,0}$, that is,
$$
C(f):=\{h\in \mathcal O_{X,0}:\ h\cdot\mathcal O_n\subset\mathcal O_{X,0}\},
\quad \mathscr C(f):=\pi^{-1}(C(f)).
$$
The conductor has the property that it is the largest ideal of $\mathcal O_{X,0}$ which is also an ideal of $\mathcal O_n$. We can compute easily $C(f)$ by using the following result of Piene \cite{RP} (see also Bruce-Marar \cite{BM}).

\begin{lem}\label{Piene} There exists a unique $\lambda\in O_n$ such that
$$
\frac{\partial g}{\partial y_i}\circ f=(-1)^i\lambda \det(df_1,\dots,df_{i-1},df_{i+1},\dots,df_{n+1}),\ 1\le i\le n+1,
$$
and moreover, $C(f)$ is generated by $\lambda$.
\end{lem}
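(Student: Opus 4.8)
The plan is to produce $\lambda$ from the identity $g\circ f\equiv 0$ by linear algebra, and then to identify the ideal $(\lambda)$ with the conductor through Gorenstein duality on the image.

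First I would exploit that $f$ maps into $X=\{g=0\}$, so that $g\circ f\equiv 0$ in $\mathcal O_n$. Differentiating by the chain rule with respect to each source coordinate $x_k$ yields $\sum_{i=1}^{n+1}(\partial g/\partial y_i\circ f)\,\partial f_i/\partial x_k=0$ for $k=1,\dots,n$; that is, the vector $v:=(\partial g/\partial y_i\circ f)_i$ lies in the left kernel of the $(n+1)\times n$ Jacobian matrix $Df$. A purely linear-algebraic identity --- expand, along the repeated column, the $(n+1)\times(n+1)$ determinant obtained from $Df$ by duplicating one of its columns --- shows that the vector of signed maximal minors $w:=\big((-1)^i M_i\big)_i$, where $M_i:=\det(df_1,\dots,df_{i-1},df_{i+1},\dots,df_{n+1})$, also lies in this kernel. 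Since $f$ is finite and generically one-to-one onto the hypersurface $(X,0)$, it is an immersion on a dense open subset of each branch, where $Df$ has rank $n$, the left kernel is one-dimensional, and $w\neq0$ spans it; hence $v=\lambda w$ for a unique meromorphic $\lambda$, which is exactly the asserted family of relations. Uniqueness of $\lambda$ is then immediate: any two solutions differ by an element killing every $M_i$, and as some $M_i$ is nonzero and $\mathcal O_n$ is a domain on each branch, the difference vanishes.

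The substantive point is that $\lambda$ is holomorphic and that $(\lambda)$ is exactly the conductor, and I would obtain both at once from duality, which is the content of Piene's and Bruce--Marar's results. The image $X$ is a reduced hypersurface, hence Gorenstein, and its dualizing module is free, generated by the Poincar\'e residue $\eta_X=\operatorname{Res}\frac{dy_1\wedge\cdots\wedge dy_{n+1}}{g}$, which on the chart $\partial g/\partial y_i\neq0$ equals, up to sign, $\frac{dy_1\wedge\cdots\wedge\widehat{dy_i}\wedge\cdots\wedge dy_{n+1}}{\partial g/\partial y_i}$. Pulling back by $f$, using $f^*(dy_1\wedge\cdots\wedge\widehat{dy_i}\wedge\cdots\wedge dy_{n+1})=M_i\,dx_1\wedge\cdots\wedge dx_n$ together with the relations above, gives $f^*\eta_X=\pm\lambda^{-1}\,dx_1\wedge\cdots\wedge dx_n$, independently of $i$ (this independence re-confirms the relations). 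On the other hand, Grothendieck duality for the finite map $\bar f\colon(\C^n,S)\to(X,0)$ identifies the conductor $\operatorname{Hom}_{\mathcal O_{X,0}}(\mathcal O_n,\mathcal O_{X,0})$ with $\operatorname{Hom}_{\mathcal O_{X,0}}(\mathcal O_n,\omega_X)\cong\bar f_*\omega_{\C^n}$, using $\omega_X\cong\mathcal O_{X,0}$. Since $\omega_{\C^n}$ is free on $dx_1\wedge\cdots\wedge dx_n$, the conductor is free of rank one over $\mathcal O_n$, and comparing the two trivializations through the residue computation pins its generator down to $\lambda$. This shows $C(f)=\lambda\,\mathcal O_n$, and in particular $\lambda\in C(f)\subseteq\mathcal O_n$, completing the existence claim.

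The main obstacle is precisely this passage from a generic, meromorphic $\lambda$ to a genuine element of $\mathcal O_n$ whose ideal is the conductor: one cannot argue by codimension, since the non-immersive locus $V(M_1,\dots,M_{n+1})$ (typically of codimension $\ge 2$) is strictly smaller than the double-point locus $V(\lambda)$ (of codimension one), as the cross-cap $f(x,y)=(x,y^2,xy)$, for which $\lambda=-x$, already illustrates. The duality identification above is the clean way around this. Alternatively one can prove holomorphicity more directly by showing that the left kernel of $Df$ is reflexive of rank one --- hence free over the regular local ring $\mathcal O_n$ --- and that $w$ generates it because the maximal minors $M_i$ have grade at least two; but this route still leaves the exact conductor equality to the duality argument.
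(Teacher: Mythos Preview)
The paper does not actually prove this lemma: it is stated as a citation of Piene \cite{RP} (with a pointer to Bruce--Marar \cite{BM}) and used as a black box throughout. So there is no ``paper's own proof'' to compare against.

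Your sketch is correct and is essentially the standard argument behind the cited results. The first half --- differentiating $g\circ f\equiv 0$ to put $(\partial g/\partial y_i\circ f)_i$ in the left kernel of $Df$, observing the signed-minor vector is also there by Laplace expansion, and concluding proportionality on the generic immersive locus --- is exactly the right linear-algebraic reduction, and your remark that this only yields a meromorphic $\lambda$ is the genuine crux. The second half, identifying $(\lambda)$ with the conductor via Gorenstein duality on the hypersurface image (free $\omega_X$ generated by the Poincar\'e residue, Grothendieck duality $\operatorname{Hom}_{\mathcal O_{X,0}}(\bar f_*\mathcal O_n,\omega_X)\cong \bar f_*\omega_{\C^n}$, and the residue computation $f^*\eta_X=\pm\lambda^{-1}\,dx$), is precisely Piene's method. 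Your closing caveat --- that one cannot simply invoke a codimension-$2$ extension argument because $V(\lambda)$ is a divisor strictly containing the non-immersive locus, as the cross-cap shows --- is well observed and is exactly why the duality step is indispensable. The alternative route you mention (freeness of the reflexive rank-one kernel over the regular ring $\mathcal O_n$, with $w$ a generator because the $M_i$ have grade $\geq 2$) also works for holomorphicity, and you are right that it still needs the duality identification to nail down the conductor equality.
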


From Lemma \ref{Piene} we have the inclusion $J(g)\cdot \mathcal O_n\subset C(f)$, which motivates the following definition.

\begin{defn} We define $M(f)$ as the kernel of the epimorphism of $\mathcal O_{n+1}$-modules induced by $\pi$:
$$ \frac{\mathscr C(f)}{J(g)}\longrightarrow \frac{C(f)}{J(g)\cdot \mathcal O_n}.$$
\end{defn}


\begin{prop}\label{snake}
We have the following exact sequence of $\mathcal O_{n+1}$-modules:
$$
0\longrightarrow K(g)\longrightarrow M(f)\longrightarrow \frac{J(g)\cdot \mathcal O_n}{J(g)\cdot \mathcal O_{X,0}}\longrightarrow 0
$$
where $K(g):=(\langle g\rangle+J(g))/J(g)$.
\end{prop}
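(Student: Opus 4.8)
The statement is labelled ``snake,'' so the plan is to realise the displayed sequence as the snake-lemma sequence of a morphism between two short exact sequences, with all three connecting vertical maps induced by $\pi$. The structural input I would isolate first is the chain of inclusions $J(g)\cdot\mathcal O_n\subseteq C(f)\subseteq\mathcal O_{X,0}$: the first comes from Lemma~\ref{Piene} (the partials $\partial g/\partial y_i\circ f$ are all multiples of the conductor generator $\lambda$), and the second is the definition of the conductor. Because of this, the ideal $J(g)\cdot\mathcal O_n$ actually sits inside $\mathcal O_{X,0}$. I would also record the companion facts $\pi(J(g))=J(g)\cdot\mathcal O_{X,0}\subseteq J(g)\cdot\mathcal O_n$ and $J(g)\subseteq\mathscr C(f)$ (the latter because $\pi(J(g))\subseteq C(f)$).

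With these in hand I can write the commutative diagram with exact rows
$$
\begin{CD}
0 @>>> J(g) @>>> \mathscr C(f) @>>> \mathscr C(f)/J(g) @>>> 0 \\
@. @V{\pi}VV @V{\pi}VV @V{\bar\pi}VV @. \\
0 @>>> J(g)\cdot\mathcal O_n @>>> C(f) @>>> C(f)/(J(g)\cdot\mathcal O_n) @>>> 0
\end{CD}
$$
whose bottom row is exact precisely because $J(g)\cdot\mathcal O_n\subseteq C(f)$, and whose right-hand vertical map $\bar\pi$ is exactly the epimorphism defining $M(f)$, so that $\ker\bar\pi=M(f)$. The next step is to compute the kernels and cokernels of the three vertical maps: the middle map $\pi\colon\mathscr C(f)\to C(f)$ is onto by the very definition $\mathscr C(f)=\pi^{-1}(C(f))$ and has kernel $\langle g\rangle$; the left map $\pi\colon J(g)\to J(g)\cdot\mathcal O_n$ has image $\pi(J(g))=J(g)\cdot\mathcal O_{X,0}$ and kernel $J(g)\cap\langle g\rangle$, so its cokernel is exactly $J(g)\cdot\mathcal O_n/(J(g)\cdot\mathcal O_{X,0})$.

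Feeding this into the snake lemma, the cokernel of the middle map and the cokernel of $\bar\pi$ both vanish, so the six-term sequence collapses to
$$
0\longrightarrow J(g)\cap\langle g\rangle\longrightarrow \langle g\rangle\longrightarrow M(f)\longrightarrow \frac{J(g)\cdot\mathcal O_n}{J(g)\cdot\mathcal O_{X,0}}\longrightarrow 0 .
$$
Reading off exactness, the map $\langle g\rangle\to M(f)$ is injective modulo $J(g)\cap\langle g\rangle$, and a single application of the second isomorphism theorem identifies $\langle g\rangle/(J(g)\cap\langle g\rangle)$ with $K(g)=(\langle g\rangle+J(g))/J(g)$. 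This yields the desired short exact sequence.

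The diagram chase itself is handled for free by the snake lemma, so the genuine content lies in the two identifications that make the outer terms come out correctly. The first is that $\ker\pi=\langle g\rangle$, which I expect to be the most delicate point: it uses that $g=0$ is a \emph{reduced} equation of $(X,0)$ together with the injectivity of $\bar f^*$, so that $\ker\pi=I(X)=\langle g\rangle$. The second is that the image of the left vertical map is $J(g)\cdot\mathcal O_{X,0}$ and nothing larger, which is precisely what turns its cokernel into the $\mathscr A_e$-codimension module of Lemma~\ref{IsoMondACodim}. Underlying both, the inclusion $J(g)\cdot\mathcal O_n\subseteq\mathcal O_{X,0}$ supplied by Piene's lemma is the fact that makes the entire bottom row live inside $\mathcal O_{X,0}$ and hence makes the diagram commute in the first place.
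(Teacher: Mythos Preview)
Your proof is correct; the snake-lemma application is clean and the identifications of the outer terms are right. The approach differs from the paper's in the choice of diagram: you work at the level of ideals, with rows
\[
0\to J(g)\to\mathscr C(f)\to\mathscr C(f)/J(g)\to 0
\quad\text{and}\quad
0\to J(g)\cdot\mathcal O_n\to C(f)\to C(f)/(J(g)\cdot\mathcal O_n)\to 0,
\]
whereas the paper works already at the level of quotient modules, applying the snake lemma to the vertical maps $\lambda_i$ from the row
\[
0\to K(g)\to \mathscr C(f)/J(g)\to C(f)/(J(g)\cdot\mathcal O_{X,0})\to 0
\]
down to
\[
0\to 0\to C(f)/(J(g)\cdot\mathcal O_n)\to C(f)/(J(g)\cdot\mathcal O_n)\to 0.
\]
In the paper's diagram the kernels of the $\lambda_i$ are directly $K(g)$, $M(f)$ and $J(g)\cdot\mathcal O_n/(J(g)\cdot\mathcal O_{X,0})$, so the desired sequence drops out with no further identification. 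Your route is slightly more elementary---the rows are tautologically exact---but requires the extra step of rewriting $\langle g\rangle/(J(g)\cap\langle g\rangle)$ as $K(g)$ at the end. Both arguments rely on the same structural inputs (Piene's inclusion $J(g)\cdot\mathcal O_n\subseteq C(f)$ and $\ker\pi=\langle g\rangle$), so the difference is purely organisational.
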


\begin{proof}
Consider the following commutative diagram:
$$
\begin{CD}
@. 0  @. 0 @. 0 @.\\
@. @VVV @VVV @VVV @.\\
0@>>> K(g) @>>> M(f) @>>> \frac{J(g)\cdot \mathcal O_n}{J(g)\cdot \mathcal O_{X,0}}@>>> 0\\
@. @VV\mu_1 V @VV\mu_2 V @VV\mu_3 V @.\\
0@>>> K(g) @>>> \frac{\mathcal C(f)}{J(g)} @>>> \frac{C(f)}{J(g)\cdot \mathcal O_{X,0}}@>>> 0\\
@. @VV\lambda_1 V @VV\lambda_2 V @VV\lambda_3 V @.\\
@. 0 @>>> \frac{C(f)}{J(g)\cdot \mathcal O_n} @>>> \frac{C(f)}{J(g)\cdot \mathcal O_n}@>>> 0\\
@. @. @VVV @VVV @.\\
@. @. 0 @. 0 @.
\end{CD}
$$
Observe that all columns and the second and third rows are exact. Therefore, from the Snake Lemma we obtain an exact sequence
$$
0\to \ker(\lambda_1)\to\ker(\lambda_2)\to\ker(\lambda_3)\to\coker(\lambda_1)\to\cdots
$$
But $\coker(\lambda_1)=0$, $\ker(\lambda_1)=K(g)$, $\ker(\lambda_2)=\im(\mu_2)=M(f)$ and $\ker(\lambda_3)=\im(\mu_3)=(J(g)\cdot \mathcal O_n)/(J(g)\cdot \mathcal O_{X,0})$, so we get the desired exact sequence.
\end{proof}

\begin{cor} \label{M(f)=0}
Let $f\colon (\C^n,S)\to(\C^{n+1},0)$ be $\mathscr A$-finite with $n\ge 2$. Then:
\begin{enumerate}
\item $M(f)=0$ if and only if $f$ is $\mathscr A$-stable and $g\in J(g)$. 
\item If $\dim_\C M(f)<\infty$, then
$$
\dim_\C M(f)=\mathscr A_e\text{-}\codim(f)+\dim_\C K(g).
$$
\end{enumerate}
\end{cor}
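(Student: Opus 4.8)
The plan is to read off both statements directly from the exact sequence in Proposition \ref{snake}, combined with Mond's formula in Lemma \ref{IsoMondACodim}. Recall that Proposition \ref{snake} gives a short exact sequence
$$
0\longrightarrow K(g)\longrightarrow M(f)\longrightarrow \frac{J(g)\cdot\mathcal O_n}{J(g)\cdot\mathcal O_{X,0}}\longrightarrow 0,
$$
so as $\mathbb C$-vector spaces (when all three terms are finite-dimensional) we have $\dim_\mathbb C M(f)=\dim_\mathbb C K(g)+\dim_\mathbb C\bigl(J(g)\cdot\mathcal O_n/J(g)\cdot\mathcal O_{X,0}\bigr)$. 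For part (2), since $n\ge 2$, Lemma \ref{IsoMondACodim} identifies the rightmost quotient with $\mathscr A_e$-$\codim(f)$, and substituting gives exactly $\dim_\mathbb C M(f)=\mathscr A_e\text{-}\codim(f)+\dim_\mathbb C K(g)$, which is the claimed formula.

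For part (1), I would argue that a short exact sequence of modules has trivial middle term if and only if both outer terms vanish. Thus $M(f)=0$ if and only if $K(g)=0$ \emph{and} $J(g)\cdot\mathcal O_n/J(g)\cdot\mathcal O_{X,0}=0$. The second condition, again by Lemma \ref{IsoMondACodim} and the hypothesis $n\ge 2$, is equivalent to $\mathscr A_e\text{-}\codim(f)=0$, i.e.\ to $f$ being $\mathscr A$-stable. The first condition, $K(g)=(\langle g\rangle+J(g))/J(g)=0$, is by definition equivalent to $\langle g\rangle\subset J(g)$, that is, to $g\in J(g)$. Combining the two equivalences yields precisely: $M(f)=0$ if and only if $f$ is $\mathscr A$-stable and $g\in J(g)$.

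The only point requiring a little care is the finiteness hypothesis in part (2): I would note that $K(g)$ is a quotient of $\mathcal O_{n+1}$-modules and the rightmost term is finite-dimensional whenever $f$ is $\mathscr A$-finite (by Lemma \ref{IsoMondACodim}), so the additivity of $\dim_\mathbb C$ across the short exact sequence is legitimate once we know $\dim_\mathbb C M(f)<\infty$; the sub- and quotient-module then automatically inherit finite dimension. I do not anticipate a serious obstacle here—the entire corollary is an immediate formal consequence of Proposition \ref{snake} and Lemma \ref{IsoMondACodim}, so the proof is essentially bookkeeping. The one subtlety worth flagging explicitly is that both equivalences in part (1) use $n\ge 2$ through the validity of Mond's formula, which fails for $n=1$ as remarked after Lemma \ref{IsoMondACodim}.
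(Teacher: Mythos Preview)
Your proof is correct and is exactly the argument the paper intends: the corollary is stated without proof precisely because it follows immediately from the short exact sequence of Proposition~\ref{snake} together with Lemma~\ref{IsoMondACodim}, which is what you do. One tiny quibble: in your final remark, only the equivalence involving the rightmost term actually uses $n\ge 2$; the identification $K(g)=0\Leftrightarrow g\in J(g)$ is purely formal and needs no hypothesis on $n$.
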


This corollary is important, because it gives a simple method to compute the $\mathscr A_e$-codimension of a map germ $f\colon (\C^n,S)\to(\C^{n+1},0)$, with $n\ge 2$, just by means of a reduced equation of the image. We will explain this with more details in Section \ref{algoritmo}.

\begin{rem}\label{nice}
If $(n,n+1)$ are nice dimensions, then the condition that $\dim_\C M(f)<\infty$ in part (2) of Corollary \ref{M(f)=0} is not necessary since it is a consequence of the $\mathscr A$-finiteness of $f$. In fact, all stable singularities in the nice dimensions are $\mathscr A$-equivalent to weighted homogeneous singularities (see \cite{MatherVI}), hence the module $K(g)$ is supported only at the origin. The same applies to $\frac{J(g)\cdot \mathcal O_n}{J(g)\cdot \mathcal O_{X,0}}$, because of Lemma~\ref{IsoMondACodim}. Now, the claim follows immediately from the exact sequence in Proposition~\ref{snake}.
\end{rem}

We finish this section with a couple of interesting properties about the ideals $C(f)$ and $\mathscr C(f)$ which will be used later.

\begin{rem}\label{rem:determinantal} It follows from the proof of \cite[Theorem 3.4]{MP} that $\mathscr C(f)$ coincides with the first Fitting ideal of $\mathcal O_n$ as an $\mathcal O_{n+1}$-module via 
$f^*\colon \mathcal O_{n+1}\to \mathcal O_n$ (that is, $\mathscr C(f)$ is the ideal generated by the submaximal  minors of a matrix presentation of $\mathcal O_n$). Furthermore, the same theorem also states that $\mathcal O_{n+1}/\mathscr C(f)$ is a determinantal ring of dimension $n-1$. 
By \cite[Proposition 1.5]{MP}, the zero locus of $\mathscr C(f)$ is
$$V(\mathscr C(f))=\left\{y\in\C^{n+1}:\ \sum_{f(x)=y}\dim_\C\frac{\mathcal O_{\C^n,x}}{f^*\mathfrak m_{\C^{n+1},y}}>1 \right\},
$$
which is equal to the points $y\in\C^{n+1}$ such that either $y=f(x)$ and $x$ is a non-immersive point of $f$ or $y=f(x)=f(x')$ with $x\ne x'$. Hence, we deduce that $V(\mathscr C(f))$ is the singular locus of $(X,0)$. This space is also known as the target double point space of $f$. 
\end{rem}

\begin{rem}\label{rem:ramif} 
Another consequence of Lemma \ref{Piene} is that multiplicacion by $\lambda$ induces an isomorphism:
$$
\frac{C(f)}{J(g)\cdot \mathcal O_n}\cong \frac{\mathcal O_n}{R(f)},
$$
where $R(f)\subset \mathcal O_n$ is the ramification ideal, that is, the ideal generated by the maximal minors of the Jacobian matrix of $f$. If $f$ is $\mathscr A$-finite and $n\ge 2$, then $\mathcal O_n/R(f)$ is a determinantal ring (of dimension $n-2$ in this case). The zero locus $V(R(f))\subset (\C^n,S)$ is the set of non-immersive points of $f$.
\end{rem}

						\section{The relative version for unfoldings}
We are interested in the behavior of the module $M(f)$ under deformations. With this motivation, we define a relative version of this module for unfoldings. Let $F$ be an $r$-parameter unfolding of $f\colon (\C^n,S)\to(\C^{n+1},0)$. We have a commutative diagram:
$$
\begin{CD}
(\C^r\times \C^n,\{0\}\times S) @>F>> (\C^r\times\C^{n+1},0)\\
@AiAA @AAjA\\
(\C^n,S) @>f>> (\C^{n+1},0),
\end{CD}
$$
where $i(x)=(0,x)$ and $j(y)=(0,y)$. This induces another commutative diagram:
\begin{equation}\label{CD}
\begin{CD}
\mathcal O_{r+n+1} @>F^*>> \mathcal O_{r+n}\\
@Vj^*VV @VVi^*V\\
\mathcal O_{n+1} @>f^*>> \mathcal O_{n},\\
\end{CD}
\end{equation}
whose columns are epimorphisms. The conductor ideal $C(f)$ and its inverse image $\mathscr C(f)$ behave well under deformations, meaning that
$$
i^*(C(F))=C(f),\quad j^*(\mathscr C(F))=\mathscr C(f).
$$
The claim for $C(f)$ follows immediately from Piene's Lemma \ref{Piene} and the claim for $\mathscr C(f)$ is a consequence of the first one and the commutative diagram (\ref{CD}):
\begin{align*}
j^*(\mathscr C(F))&=j^*\left((F^*)^{-1}(C(F))\right)=(f^*)^{-1}\left(i^*(C(F))\right)\\
&=(f^*)^{-1}(C(f))=\mathscr C(f).
\end{align*}

Now we need an ideal which gives a deformation of the Jacobian ideal $J(g)$. Let $G\in\mathcal O_{r+n+1}$ be such that $G=0$ is a reduced equation of $(\mathcal X,0)$ of $F$ and such that $j^*(G)=g$. It is not true that $j^*(J(G))=j(g)$ since $J(G)$ contains the additional partial derivatives with respect to the parameters $u_i$. Instead of it, we consider the relative Jacobian ideal $J_{y}(G)$, that is, the ideal in $\mathcal O_{r+n+1}$ generated by the partial derivatives of $G$ with respect to the variables $y_i$, $i=1,\dots,n+1$. Then, we get:
$$
j^*(J_y(G))=J(g).
$$

\begin{defn}
We define $M_{y}(F)$ as the kernel of the epimorphism of $\mathcal O_{r+n+1}$-modules induced by $F^*$:
$$
 \frac{\mathscr C(F)}{J_{y}(G)}\longrightarrow \frac{C(F)}{J_{y}(G)\cdot \cO_{r+n}}.
$$
\end{defn}

The main result of this section will be that the module $M_y(F)$ specialises to $M(f)$ when $u=0$, that is,
$$
M_y(F)\otimes\dfrac{\cO_r}{\km_r}\cong M(f),
$$
where $\km_r$ is the maximal ideal of $\mathcal O_{r}$, and the isomorphism is induced by the epimorphism $j^*$. Although the ideals $\mathscr C(F), C(F), J_y(G)$ specialise to $\mathscr C(f), C(f), J(g)$ as ideals, respectively, it is not so obvious from the definition that $M_y(F)$ specialises to $M(f)$.

From now on in this section and unless otherwise stated, the symbol $\cong$ will be used to represent an isomorphism of modules induced by $j^*$.

	\begin{lem}\label{tensorsCCaligraphic}
For any $r$-parameter unfolding $F$ of $f$, we have:
$$
\cC(F)\otimes\dfrac{\cO_r}{\km_r}\cong \cC(f),
$$
and moreover $I\cdot\cC(F)=I\cap\cC(F)$, where $I=\km_r\cdot O_{r+n+1}$.
\end{lem}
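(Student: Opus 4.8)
The plan is to reduce both assertions to the single statement that the parameters $u_1,\dots,u_r$ of the unfolding, which generate $I=\km_r\cdot\mathcal O_{r+n+1}$, form a regular sequence on the ring $\mathcal O_{r+n+1}/\mathscr C(F)$.

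First I would disentangle the logical relation between the two claims. Since $\mathscr C(F)$ is an ideal of $\mathcal O_{r+n+1}$ and $\mathcal O_{r+n+1}\cdot\mathscr C(F)=\mathscr C(F)$, the $\mathcal O_r$-module structure on $\mathscr C(F)$ gives $\km_r\cdot\mathscr C(F)=I\cdot\mathscr C(F)$, whence $\mathscr C(F)\otimes_{\mathcal O_r}(\mathcal O_r/\km_r)=\mathscr C(F)/I\mathscr C(F)$. On the other hand, the equality $j^*(\mathscr C(F))=\mathscr C(f)$ established just before the lemma, together with $\ker j^*=I$, shows that $j^*$ induces an isomorphism $\mathscr C(F)/(\mathscr C(F)\cap I)\cong\mathscr C(f)$. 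Hence the first isomorphism of the lemma holds, and is induced by $j^*$, exactly when $I\mathscr C(F)=\mathscr C(F)\cap I$, which is the second assertion. So it suffices to prove $I\cap\mathscr C(F)=I\cdot\mathscr C(F)$.

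Next I would invoke the standard identity $\operatorname{Tor}_1^{R}(R/I,R/J)\cong(I\cap J)/IJ$, valid for any ideals $I,J$ of a commutative ring $R$, applied with $R=\mathcal O_{r+n+1}$, $J=\mathscr C(F)$ and $R/I=\mathcal O_{n+1}$. Thus $I\cap\mathscr C(F)=I\cdot\mathscr C(F)$ is equivalent to the vanishing of $\operatorname{Tor}_1^{\mathcal O_{r+n+1}}(\mathcal O_{n+1},\mathcal O_{r+n+1}/\mathscr C(F))$. Because $I=(u_1,\dots,u_r)$ is generated by a regular sequence of $\mathcal O_{r+n+1}$, the Koszul complex on $u_1,\dots,u_r$ is a free resolution of $\mathcal O_{n+1}=\mathcal O_{r+n+1}/I$, so $\operatorname{Tor}_i^{\mathcal O_{r+n+1}}(\mathcal O_{n+1},M)$ computes the Koszul homology $H_i(u_1,\dots,u_r;M)$. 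Taking $M=\mathcal O_{r+n+1}/\mathscr C(F)$, the desired vanishing of $\operatorname{Tor}_1$ follows once $u_1,\dots,u_r$ is a regular sequence on $\mathcal O_{r+n+1}/\mathscr C(F)$, since a regular sequence annihilates all positive Koszul homology.

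The crux is therefore the regular sequence property, which I would establish by a dimension count inside a Cohen-Macaulay ring. By Remark~\ref{rem:determinantal} applied to the unfolding $F$, the quotient $\mathcal O_{r+n+1}/\mathscr C(F)$ is a determinantal ring of dimension $r+n-1$, and being determinantal of the expected codimension it is Cohen-Macaulay. Reducing modulo $I=(u_1,\dots,u_r)$ gives $\mathcal O_{r+n+1}/(\mathscr C(F)+I)$, which via $j^*$ is isomorphic to $\mathcal O_{n+1}/\mathscr C(f)$, of dimension $n-1$ by the same remark applied to $f$. Thus passing to the quotient by the $r$ elements $u_1,\dots,u_r$ drops the dimension by exactly $r$, so these elements are part of a system of parameters of a Cohen-Macaulay local ring and hence form a regular sequence on $\mathcal O_{r+n+1}/\mathscr C(F)$; this finishes the reduction. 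I expect the main obstacle to be bookkeeping rather than conceptual: one must check that the unfolding $F$ inherits from $f$ the hypotheses (finiteness and generic injectivity) needed for Remark~\ref{rem:determinantal} to apply to $F$, and that the dimensions $r+n-1$ and $n-1$ are read off correctly. The commutative algebra itself, namely the Koszul computation of $\operatorname{Tor}$ against a regular sequence and the fact that part of a system of parameters in a Cohen-Macaulay local ring is a regular sequence, is entirely standard.
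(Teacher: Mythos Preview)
Your proof is correct and rests on the same key input as the paper's: the determinantal ring $\mathcal O_{r+n+1}/\mathscr C(F)$ is Cohen--Macaulay of dimension $r+n-1$, and reducing modulo $I$ gives $\mathcal O_{n+1}/\mathscr C(f)$ of dimension $n-1$, so the $r$-step drop forces the parameters to be a regular sequence (equivalently, forces $\mathcal O_r$-flatness).

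The only difference is packaging. The paper proves the isomorphism first, by applying the long exact $\Tor$-sequence over $\mathcal O_r$ to $0\to\mathscr C(F)\to\mathcal O_{r+n+1}\to\mathcal O_{r+n+1}/\mathscr C(F)\to 0$ and invoking $\mathcal O_r$-flatness of the quotient to kill $\Tor_1^{\mathcal O_r}$; it then reads off $I\cdot\mathscr C(F)=I\cap\mathscr C(F)$ by comparing the two descriptions of the tensor product. You instead show the two assertions are equivalent, then establish $I\cap\mathscr C(F)=I\cdot\mathscr C(F)$ directly via the identity $\Tor_1^R(R/I,R/J)\cong(I\cap J)/IJ$ and Koszul homology over $\mathcal O_{r+n+1}$. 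Since the Koszul complex on $u_1,\dots,u_r$ computes both $\Tor^{\mathcal O_r}(-,\mathcal O_r/\km_r)$ and $\Tor^{\mathcal O_{r+n+1}}(-,\mathcal O_{r+n+1}/I)$, the two formulations are literally computing the same homology; your route just makes the regular-sequence mechanism more explicit, while the paper's phrasing in terms of $\mathcal O_r$-flatness dovetails more directly with the later specialisation arguments.
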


\begin{proof}
Since $I$ is the kernel of $j^*$, we have
{$(\mathscr C(F)+I/I)=  j^*(\mathscr C(F))=\mathscr C(f)$} and from this we deduce:
$$
\frac{\cO_{r+n+1}}{\cC(F)}\otimes\dfrac{\cO_r}{\km_r}= \frac{\cO_{r+n+1}}{\cC(F)+I}
=  \frac{\cO_{r+n+1}/I}{(\cC(F)+I)/I}\cong \dfrac{\cO_{n+1}}{\cC(f)}.
$$
Take the exact sequence of $\cO_r$-modules
$$0\longrightarrow\cC(F)\longrightarrow\cO_{r+n+1} \longrightarrow \frac{\cO_{r+n+1}}{\cC(F)}\longrightarrow 0$$
and consider the induced long exact Tor-sequence:
$$\dots\longrightarrow\Tor_1^{\cO_r}\left(\frac{\cO_{r+n+1}}{\cC(F)},\dfrac{\cO_r}{\km_r}\right)\longrightarrow\cC(F)\otimes\dfrac{\cO_r}{\km_r}\longrightarrow\cO_{n+1}\longrightarrow \dfrac{\cO_{n+1}}{\cC(f)}\longrightarrow 0.$$

By Remark \ref{rem:determinantal}, $\cO_{r+n+1}/\cC(F)$ is determinantal of dimension $r+n-1$. Then, it is Cohen-Macaulay and since the fibre $\cO_{n+1}/\cC(f)$ has dimension $n-1$, it is $\cO_r$-flat. Therefore,
$$
\Tor_1^{\cO_r}\left(\frac{\cO_{r+n+1}}{\cC(F)},\dfrac{\cO_r}{\km_r}\right)=0,
$$
and the above exact sequence implies
$$
\cC(F)\otimes\dfrac{\cO_r}{\km_r}\cong \cC(f).
$$

To show the second part, on one hand we have  
$$\cC(f)\cong  \dfrac{\cC(F)+I}{I}= \dfrac{\cC(F)}{\cC(F)\cap I}.$$
On the other hand, we have  
$$\cC(F)\otimes\dfrac{\cO_r}{\km_r}=  \dfrac{\cC(F)}{\km_r\cdot\cC(F)}=\dfrac{\cC(F)}{I\cdot\cC(F)},$$
and the result follows from the first part of the lemma.
\end{proof}

An analogous proof shows the following:
	
	\begin{lem}\label{lem:C(F)}
	For any $r$-parameter unfolding $F$ of $f$, we have:
$$
C(F)\otimes\dfrac{\cO_r}{\km_r}\cong  C(f),
$$
and moreover $L\cdot C(F)=L\cap C(F)$, where $L=\km_r\cdot \cO_{r+n}$.
%
\end{lem}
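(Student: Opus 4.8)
The plan is to run the argument of Lemma~\ref{tensorsCCaligraphic} verbatim, replacing the pair $(\cC(F),\cC(f))$ by $(C(F),C(f))$, the ambient rings $(\cO_{r+n+1},\cO_{n+1})$ by $(\cO_{r+n},\cO_n)$, the epimorphism $j^*$ by $i^*$, and the ideal $I$ by $L=\km_r\cdot\cO_{r+n}$, which is precisely $\ker(i^*)$. First I would record, exactly as before, that $(C(F)+L)/L=i^*(C(F))=C(f)$ by the displayed specialisation property $i^*(C(F))=C(f)$, and deduce the identification
$$
\frac{\cO_{r+n}}{C(F)}\otimes\frac{\cO_r}{\km_r}=\frac{\cO_{r+n}}{C(F)+L}\cong\frac{\cO_n}{C(f)}.
$$

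Next I would tensor the short exact sequence $0\to C(F)\to\cO_{r+n}\to\cO_{r+n}/C(F)\to0$ with $\cO_r/\km_r$ over $\cO_r$ and read off the long exact Tor-sequence, so that the desired isomorphism $C(F)\otimes\cO_r/\km_r\cong C(f)$ follows once
$$
\Tor_1^{\cO_r}\!\left(\frac{\cO_{r+n}}{C(F)},\frac{\cO_r}{\km_r}\right)=0.
$$
As in Lemma~\ref{tensorsCCaligraphic}, this Tor vanishing is equivalent to $\cO_r$-flatness of $\cO_{r+n}/C(F)$, which I would obtain from the miracle-flatness criterion over the regular base $\cO_r$: it suffices that $\cO_{r+n}/C(F)$ be Cohen--Macaulay and that its special fibre $\cO_n/C(f)$ have dimension exactly $(r+n-1)-r=n-1$. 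Here the argument is in fact cleaner than for $\cC$: rather than invoking the determinantal structure of Remark~\ref{rem:determinantal}, I would apply Piene's Lemma~\ref{Piene} to the unfolding $F$, which (being finite and generically one-to-one of the same type $(\C^{r+n},\cdot)\to(\C^{r+n+1},0)$) shows $C(F)=\langle\Lambda\rangle$ is principal for a single $\Lambda\in\cO_{r+n}$. Since $\Lambda\neq0$ is a nonzerodivisor in the regular ring $\cO_{r+n}$, the quotient $\cO_{r+n}/C(F)$ is a hypersurface, hence Cohen--Macaulay of dimension $r+n-1$; likewise $C(f)=\langle\lambda\rangle$ gives $\cO_n/C(f)$ dimension $n-1$, and the dimension count $r+n-1=r+(n-1)$ is immediate.

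Finally, for the second assertion I would argue exactly as in the closing paragraph of Lemma~\ref{tensorsCCaligraphic}: on one side the first part yields $C(f)\cong(C(F)+L)/L=C(F)/(C(F)\cap L)$, while on the other side $C(F)\otimes\cO_r/\km_r=C(F)/(\km_r\cdot C(F))=C(F)/(L\cdot C(F))$; comparing the two descriptions forces $L\cdot C(F)=L\cap C(F)$. The only point requiring genuine care---and hence the main obstacle---is the flatness step, namely verifying that the special fibre $\cO_n/C(f)$ really has the expected dimension $n-1$ so that the Cohen--Macaulay hypersurface $\cO_{r+n}/C(F)$ is honestly $\cO_r$-flat; once the hypersurface structure coming from Piene's Lemma is in hand, the remaining steps are the same formal diagram chases as in the case of $\cC$.
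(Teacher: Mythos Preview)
Your proposal is correct and is precisely the ``analogous proof'' the paper has in mind: the only substantive adaptation needed in passing from $\cC(F)$ to $C(F)$ is the source of Cohen--Macaulayness for the quotient, and invoking Piene's Lemma to get a principal conductor (hence a hypersurface quotient) is exactly the natural replacement for the determinantal argument of Remark~\ref{rem:determinantal}. The remaining Tor-sequence and second-isomorphism manipulations are identical to those in Lemma~\ref{tensorsCCaligraphic}, so there is nothing to add.
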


%
%

	\begin{prop}\label{prop:special}
For any $r$-parameter unfolding $F$ of $f$, the following hold:
\begin{enumerate}
\item$\displaystyle\dfrac{\mathscr C(F)}{J_y(G)}\otimes\dfrac{\cO_{r}}{\km_r}\cong \dfrac{\mathscr C(f)}{J(g)}$,
\item$\displaystyle\dfrac{C(F)}{J_y(G)\cdot \cO_{n+r}}\otimes\dfrac{\cO_{r}}{\km_r}\cong \dfrac{C(f)}{J(g)\cdot \cO_{n}}$.
\end{enumerate}

\begin{proof}
We show item (1), since the proof of item (2) is analogous. In order to simplify the notation, we write $\mathscr C:=\mathscr C(F)$ and $J:=J_y(G)$. By Lemma  \ref{tensorsCCaligraphic},
\begin{align*}
\dfrac{\mathscr C}{J}\otimes\dfrac{\cO_{r}}{\km_r}
&\cong \dfrac{\mathscr C /J }{I\cdot(\mathscr C /J )}
=  \dfrac{\mathscr C /J }{(I\cdot\mathscr C +J )/J }\\
&= \dfrac{\mathscr C }{I\cap\mathscr C +J }
= \dfrac{\mathscr C /I\cap\mathscr C }{(I\cap\mathscr C +J )/I\cap\mathscr C }\\
&= \dfrac{\mathscr C /I\cap\mathscr C }{J /I\cap\mathscr C \cap J }
= \dfrac{\mathscr C /I\cap\mathscr C }{J /I\cap J }\\
&= \dfrac{(\mathscr C +I)/I}{(J +I)/I}\cong \dfrac{\cC(f)}{J(g)}.
\end{align*}
\end{proof}
\end{prop}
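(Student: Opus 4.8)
The plan is to compute each fibre directly, turning the tensor product $-\otimes_{\cO_r}\cO_r/\km_r$ into an honest quotient of ideals, and then to match the result with the target through the specialisation map $j^*$ (resp.\ $i^*$). I would carry out item~(1) in full and obtain item~(2) verbatim, with the roles of Lemma~\ref{tensorsCCaligraphic} and Lemma~\ref{lem:C(F)} interchanged.

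For item~(1), write $\mathscr C:=\mathscr C(F)$, $J:=J_y(G)$ and $I=\km_r\cdot\cO_{r+n+1}$. Since $\mathscr C/J$ is an $\cO_{r+n+1}$-module, tensoring over $\cO_r$ with $\cO_r/\km_r$ amounts to reducing modulo $I$, so the fibre is $(\mathscr C/J)\big/\,I\cdot(\mathscr C/J)=\mathscr C/(I\mathscr C+J)$. The first substantial input is the second assertion of Lemma~\ref{tensorsCCaligraphic}, namely $I\mathscr C=I\cap\mathscr C$, which lets me rewrite the fibre as $\mathscr C/((I\cap\mathscr C)+J)$. This replacement is genuinely needed: without it one would be stuck with $I\mathscr C+J$, which need not meet $\mathscr C\cap(I+J)$.

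The crux is then the purely ideal-theoretic identity
$$(I\cap\mathscr C)+J=\mathscr C\cap(I+J),$$
whose only nonformal ingredient is the inclusion $J\subseteq\mathscr C$, i.e.\ $J_y(G)\subseteq\mathscr C(F)$ (the relative form of $J(g)\cdot\cO_n\subseteq C(f)$ coming from Piene's Lemma~\ref{Piene}). The inclusion $\subseteq$ is immediate; for $\supseteq$ one writes an element of $\mathscr C\cap(I+J)$ as $a+b$ with $a\in J\subseteq\mathscr C$ and $b\in I$, whence $b\in I\cap\mathscr C$. Granting this, the third isomorphism theorem gives $\mathscr C/(\mathscr C\cap(I+J))\cong(\mathscr C+I)/(I+J)$, and since $I=\ker j^*$ with $j^*(\mathscr C)=\mathscr C(f)$ and $j^*(J)=J(g)$, the right-hand side is exactly $\mathscr C(f)/J(g)$, the isomorphism being induced by $j^*$ as required.

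For item~(2) the same scheme applies to the $\cO_{r+n}$-module $C(F)/(J_y(G)\cdot\cO_{r+n})$, reduced modulo $L=\km_r\cdot\cO_{r+n}$: the role of $I\mathscr C=I\cap\mathscr C$ is now played by $L\cdot C(F)=L\cap C(F)$ from Lemma~\ref{lem:C(F)}, the needed inclusion is $J_y(G)\cdot\cO_{r+n}\subseteq C(F)$, and the final identification goes through $i^*$ (whose kernel is $L$) using $i^*(C(F))=C(f)$. I expect the main obstacle to be bookkeeping: checking that every isomorphism in the chain is compatible with, and actually induced by, the single map $j^*$ (resp.\ $i^*$), rather than being merely abstract. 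The genuinely nontrivial mathematical content is concentrated entirely in the flatness identities $I\mathscr C=I\cap\mathscr C$ and $L\cdot C(F)=L\cap C(F)$ already supplied by the two preceding lemmas; everything else is the isomorphism theorems together with the inclusion $J_y(G)\subseteq\mathscr C(F)$.
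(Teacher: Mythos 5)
Your proposal is correct and follows essentially the same route as the paper: both reduce the tensor product to the quotient $\mathscr C/(I\mathscr C+J)$, invoke the identity $I\mathscr C=I\cap\mathscr C$ from Lemma~\ref{tensorsCCaligraphic} (resp.\ $L\cdot C(F)=L\cap C(F)$ from Lemma~\ref{lem:C(F)} for item~(2)), exploit the inclusion $J_y(G)\subseteq\mathscr C(F)$ coming from Piene's lemma, and conclude via the isomorphism theorems and the identification through $j^*$ (resp.\ $i^*$). The only difference is cosmetic: you isolate the identity $(I\cap\mathscr C)+J=\mathscr C\cap(I+J)$ and apply the isomorphism theorem once, whereas the paper reaches the same endpoint through a longer chain of quotient manipulations without ever stating that identity explicitly.
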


Next lemma shows that over the source ring $\mathcal O_{r+n}$, the Jacobian ideals $J(G)$ and $J_y(G)$ coincide.

	\begin{lem}\label{lem:jacob-source}
	For any $r$-parameter unfolding $F$ of $f$, we have:
$$J_y(G)\cdot \mathcal O_{r+n}=J(G)\cdot \mathcal O_{r+n}.$$
\end{lem}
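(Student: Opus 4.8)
The plan is to reduce the whole statement to the single relation $F^*(G)=0$ in $\mathcal O_{r+n}$. One inclusion is free: since $J_y(G)\subseteq J(G)$ as ideals of $\mathcal O_{r+n+1}$, extending by $F^*$ gives $J_y(G)\cdot\mathcal O_{r+n}\subseteq J(G)\cdot\mathcal O_{r+n}$ immediately, so only the reverse inclusion requires work. For that, I first record that $G\circ F=0$: by hypothesis $G=0$ is a reduced equation of the image $(\mathcal X,0)$ of $F$, and $F$ maps (a representative of) its smooth source onto (a representative of) $\mathcal X$, so $G\circ F$ vanishes identically on a smooth germ and is therefore the zero germ, i.e. $F^*(G)=0$.

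Next I would differentiate this identity with respect to the unfolding parameters. Write the source coordinates as $(u,x)=(u_1,\dots,u_r,x_1,\dots,x_n)$ and the target coordinates as $(u,y)=(u_1,\dots,u_r,y_1,\dots,y_{n+1})$, so that $F(u,x)=(u,f_u(x))$ with $f_u(x)=(f^1(u,x),\dots,f^{n+1}(u,x))$. Because $F$ leaves each parameter $u_i$ fixed, the chain rule applied to $G(u,f^1(u,x),\dots,f^{n+1}(u,x))=0$ gives, for each $i=1,\dots,r$,
\[
F^*\!\left(\frac{\partial G}{\partial u_i}\right)+\sum_{j=1}^{n+1}\frac{\partial f^j}{\partial u_i}\,F^*\!\left(\frac{\partial G}{\partial y_j}\right)=0 ,
\]
where the first term arises from the explicit $u$-dependence of $G$ composed with $F$ (here it is essential that $F$ preserves the $u$-coordinates). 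This exhibits every $F^*(\partial G/\partial u_i)$ as an $\mathcal O_{r+n}$-linear combination of the elements $F^*(\partial G/\partial y_j)$, which generate $J_y(G)\cdot\mathcal O_{r+n}$.

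To conclude, I would note that $J(G)\cdot\mathcal O_{r+n}$ is generated by the $F^*(\partial G/\partial u_i)$ together with the $F^*(\partial G/\partial y_j)$; the displayed identity puts the former already inside $J_y(G)\cdot\mathcal O_{r+n}$, yielding $J(G)\cdot\mathcal O_{r+n}\subseteq J_y(G)\cdot\mathcal O_{r+n}$ and hence the equality. The computation is elementary, so I do not anticipate a genuine obstacle; the only points that deserve care are the justification of $F^*(G)=0$ (equivalently, the surjectivity of $F$ onto a representative of $\mathcal X$ near the marked points, which follows from $f$ — and hence $F$ — being finite and generically one-to-one) and the bookkeeping of exactly which partial derivatives generate each ideal after extension along $F^*$.
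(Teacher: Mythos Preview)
Your proof is correct and in fact more direct than the paper's. The paper argues via Piene's Lemma: it notes that $dF=\begin{pmatrix} I_r & 0\\ * & df_u\end{pmatrix}$, so the $(r+n)$-minors $M_i$ of $dF$ obtained by deleting a $u$-row are $\mathcal O_{r+n}$-combinations of the minors $M'_j$ obtained by deleting a $y$-row; Piene's Lemma then gives $\frac{\partial G}{\partial u_i}\circ F=\Lambda M_i$ and $\frac{\partial G}{\partial y_j}\circ F=\Lambda M'_j$, and the inclusion follows. You obtain the same relation $F^*(\partial G/\partial u_i)\in J_y(G)\cdot\mathcal O_{r+n}$ by simply differentiating the identity $G\circ F=0$ with respect to $u_i$ via the chain rule, bypassing Piene's Lemma entirely. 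Your route is shorter and uses only the single fact $F^*(G)=0$; the paper's route has the advantage of making the link to the maximal minors of $dF$ explicit, which fits the surrounding narrative (Piene's Lemma and the ramification ideal $R(F)$ are used again immediately afterwards).
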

\begin{proof}
Let us write $F(u,x)=(u,f_u(x))$, then the Jacobian matrix of $F$ has the following format:
$$
dF=
\left(
\begin{array}{cc}
I_r  &  0  \\
*  &  df_u   \\
\end{array}
\right),
$$
where $df_u$ is the Jacobian matrix of $f_u$, but considered with entries in $\mathcal O_{r+n}$. 
Denote by $M_1,\dots,M_r,M'_1,\dots,M'_{n+1}$ the {$r+n$}-minors of $dF$ in such a way that $M'_1,\dots,M'_{n+1}$ are the {$n$}-minors of $df_u$. Then $M_1,\dots,M_r$ can be generated from the other minors $M'_1,\dots,M'_{n+1}$. That is, we can put
$$
M_i=\sum_j a_{ij} M_j',
$$
for some $a_{ij}\in\mathcal O_{r+n}$. Now, by Piene's Lemma \ref{Piene}:
$$
\frac{\partial G}{\partial u_i}\circ F=\Lambda M_i,\quad \frac{\partial G}{\partial y_j}\circ F=\Lambda M'_j,
$$
where $\Lambda$ is the generator of the conductor ideal $C(F)$. We have:
$$
\frac{\partial G}{\partial u_i}\circ F=\sum a_{ij} \frac{\partial G}{\partial y_j}\circ F.
$$
\end{proof}

Now we arrive to the main result of this section.

	\begin{thm}\label{thmfamilia}
If $F$ is any $r$-parameter unfolding of $f$, then:
$$M_y(F)\otimes \dfrac{\cO_{r}}{\km_r}\cong M(f).$$

\begin{proof}
We have a short exact sequence coming from the definition of $M_y(F)$:
$$
0\longrightarrow M_y(F)\longrightarrow \frac{\mathscr C(F)}{J_y(G)}\longrightarrow
\frac{C(F)}{J_y(G)\cdot\mathcal O_{r+n}}\longrightarrow 0.
$$
By tensoring with $\cO_r/\km_r$ and by the results of Proposition \ref{prop:special}, we obtain the following associated long exact Tor-sequence:
\begin{align*}
\dots&\longrightarrow\Tor_1^{\cO_r}\left(\frac{C(F)}{J_y(G)\cdot\mathcal O_{r+n}},\dfrac{\cO_r}{\km_r}\right)\\ 
&\quad\longrightarrow M_y(F)\otimes \dfrac{\cO_{r}}{\km_r}\longrightarrow\frac{\mathscr C(f)}{J(g)}\longrightarrow\frac{C(f)}{J(g)\cdot\mathcal O_{n}}\longrightarrow 0.
\end{align*}

We claim that $C(F)/J_y(G)\cdot\mathcal O_{r+n}$ is $\cO_r$-flat. In fact, by Lemma \ref{lem:jacob-source}, 
$$
\frac{C(F)}{J_y(G)\cdot\mathcal O_{r+n}}=\frac{C(F)}{J(G)\cdot\mathcal O_{r+n}}\cong\frac{\cO_{r+n}}{R(F)},$$
where $R(F)$ is the ramification ideal and the isomorphism here is induced by multiplication of the generator of $C(F)$ (see Remark \ref{rem:ramif}). But $\cO_{r+n}/R(F)$ is determinantal of dimension $r+n-2$. Then, it is Cohen-Macaulay and since the fibre $\cO_{n}/R(f)$ has dimension $n-2$, it is $\cO_r$-flat. Therefore, 
$$
\Tor_1^{\cO_r}\left(\frac{C(F)}{J_y(G)\cdot\mathcal O_{r+n}},\dfrac{\cO_r}{\km_r}\right)=0,
$$
and from the above exact sequence we get:
$$M_y(F)\otimes \dfrac{\cO_{r}}{\km_r}\cong M(f).$$
\end{proof}
\end{thm}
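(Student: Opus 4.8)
The plan is to apply the right-exact functor $-\otimes_{\cO_r}\cO_r/\km_r$ to the short exact sequence defining $M_y(F)$ and to read off the conclusion from the resulting long exact $\Tor$-sequence. Starting from
$$0\longrightarrow M_y(F)\longrightarrow \frac{\mathscr C(F)}{J_y(G)}\longrightarrow \frac{C(F)}{J_y(G)\cdot\cO_{r+n}}\longrightarrow 0,$$
tensoring over $\cO_r$ produces
$$\cdots\longrightarrow \Tor_1^{\cO_r}\!\left(\frac{C(F)}{J_y(G)\cdot\cO_{r+n}},\frac{\cO_r}{\km_r}\right)\longrightarrow M_y(F)\otimes\frac{\cO_r}{\km_r}\longrightarrow \frac{\mathscr C(F)}{J_y(G)}\otimes\frac{\cO_r}{\km_r}\longrightarrow \frac{C(F)}{J_y(G)\cdot\cO_{r+n}}\otimes\frac{\cO_r}{\km_r}\longrightarrow 0.$$
By Proposition~\ref{prop:special} the last two nonzero terms are identified, via $j^*$, with $\mathscr C(f)/J(g)$ and $C(f)/(J(g)\cdot\cO_n)$; since these isomorphisms and the horizontal arrows are all induced by $F^*$ and $f^*$, which are compatible through diagram~(\ref{CD}), the rightmost map is precisely the epimorphism whose kernel defines $M(f)$. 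Thus the entire problem reduces to showing that the $\Tor_1$ term vanishes, for then the sequence collapses to
$$0\longrightarrow M_y(F)\otimes\frac{\cO_r}{\km_r}\longrightarrow \frac{\mathscr C(f)}{J(g)}\longrightarrow \frac{C(f)}{J(g)\cdot\cO_n}\longrightarrow 0,$$
exhibiting $M_y(F)\otimes\cO_r/\km_r$ as the kernel of the defining map of $M(f)$, that is, as $M(f)$ itself.

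The $\Tor_1$ vanishing will follow from the $\cO_r$-flatness of $C(F)/(J_y(G)\cdot\cO_{r+n})$, and establishing this flatness is the heart of the argument. First I would replace $J_y(G)\cdot\cO_{r+n}$ by $J(G)\cdot\cO_{r+n}$ using Lemma~\ref{lem:jacob-source}, so that the module in question becomes $C(F)/(J(G)\cdot\cO_{r+n})$. The relative form of Remark~\ref{rem:ramif}, namely multiplication by the generator $\Lambda$ of the conductor $C(F)$, then identifies this with $\cO_{r+n}/R(F)$, where $R(F)$ is the ramification ideal of $F$. The essential structural input is that $\cO_{r+n}/R(F)$ is determinantal of the expected dimension $r+n-2$, hence Cohen-Macaulay.

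The flatness is concluded by the standard criterion over the regular base $\cO_r$: a Cohen-Macaulay $\cO_{r+n}$-module that is finite over $\cO_r$ is $\cO_r$-flat as soon as its special fibre has the expected dimension. Here the fibre is $\cO_n/R(f)$, which by Remark~\ref{rem:ramif} has dimension $n-2=(r+n-2)-r$; since the total space has dimension $r+n-2$ and the base has dimension $r$, the fibre dimension is exactly the difference, so flatness holds and the $\Tor_1$ term vanishes. I expect the main obstacle to be precisely this Cohen-Macaulayness and dimension count: everything else is formal homological algebra once the specialisation isomorphisms of Proposition~\ref{prop:special} are in hand, but the flatness rests entirely on recognising the relative ramification locus as a determinantal scheme of the correct codimension — exactly the mechanism already exploited for the conductor in Lemma~\ref{tensorsCCaligraphic}.
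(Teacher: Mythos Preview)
Your proof is correct and follows exactly the same route as the paper: tensor the defining short exact sequence, invoke Proposition~\ref{prop:special} for the two rightmost terms, and kill the $\Tor_1$ obstruction by showing $C(F)/(J_y(G)\cdot\cO_{r+n})\cong\cO_{r+n}/R(F)$ is Cohen--Macaulay of the expected dimension, hence $\cO_r$-flat. One small slip: the module $\cO_{r+n}/R(F)$ is \emph{not} finite over $\cO_r$ (it has dimension $r+n-2>r$ for $n\ge 3$); the flatness criterion you actually apply is the correct one---regular base, Cohen--Macaulay total space, and $\dim(\text{total})=\dim(\text{base})+\dim(\text{fibre})$---so just drop the phrase ``finite over $\cO_r$''.
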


We finish this section with the next proposition, which gives the relative version of the short exact sequence of Proposition \ref{snake} for unfoldings. The proof is based on a commtutative diagrama analogous to that appearing in the proof of \ref{snake}, but with the relative version of the modules.

\begin{prop}\label{snake-family}
Let $F$ be an $r$-parameter unfolding of $f$. We have the following exact sequence of $\mathcal O_{r+n+1}$-modules:
$$
0\longrightarrow K_y(G)\longrightarrow M_y(F)\longrightarrow \frac{J_y(G)\cdot \mathcal O_{r+n}}{J_y(G)\cdot \mathcal O_{\mathcal X,0}}\longrightarrow 0
$$
where $K_y(G):=(\langle G\rangle+J_y(G))/J_y(G)$.
\end{prop}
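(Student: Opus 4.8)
The plan is to mirror the diagram chase proving Proposition~\ref{snake}, replacing every object by its relative counterpart over $\mathcal{O}_{r+n+1}$. Write $\Pi\colon\mathcal{O}_{r+n+1}\to\mathcal{O}_{\mathcal{X},0}$ for the epimorphism through which $F^*$ factors, so that $\ker\Pi=\langle G\rangle$ and $\mathscr{C}(F)=\Pi^{-1}(C(F))$. I would assemble a commutative diagram of $\mathcal{O}_{r+n+1}$-modules whose lower two rows are the exact sequences
$$
0\longrightarrow K_y(G)\longrightarrow\frac{\mathscr{C}(F)}{J_y(G)}\longrightarrow\frac{C(F)}{J_y(G)\cdot\mathcal{O}_{\mathcal{X},0}}\longrightarrow 0
$$
and
$$
0\longrightarrow\frac{C(F)}{J_y(G)\cdot\mathcal{O}_{r+n}}\longrightarrow\frac{C(F)}{J_y(G)\cdot\mathcal{O}_{r+n}}\longrightarrow 0,
$$
the latter having the identity as its nonzero map, joined by vertical arrows $\lambda_1\colon K_y(G)\to 0$, $\lambda_2$ the defining epimorphism of $M_y(F)$, and $\lambda_3$ the natural projection. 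The asserted short exact sequence is then read off as the sequence of kernels of the $\lambda_i$ produced by the Snake Lemma, exactly as in Proposition~\ref{snake}.

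First I would check the ingredients of this diagram. The bottom row is trivially exact. The middle row is induced by $\Pi$: its right-hand map is surjective because $\Pi(\mathscr{C}(F))=C(F)$, and its kernel is $(\langle G\rangle+J_y(G))/J_y(G)=K_y(G)$, since $\langle G\rangle\subseteq\mathscr{C}(F)$ and $\Pi(J_y(G))=J_y(G)\cdot\mathcal{O}_{\mathcal{X},0}$. The third column is exact once we know $J_y(G)\cdot\mathcal{O}_{\mathcal{X},0}\subseteq J_y(G)\cdot\mathcal{O}_{r+n}\subseteq C(F)$, which makes $\lambda_3$ a well-defined surjection with kernel $(J_y(G)\cdot\mathcal{O}_{r+n})/(J_y(G)\cdot\mathcal{O}_{\mathcal{X},0})$; the middle column is exact by the very definition of $M_y(F)=\ker\lambda_2$.

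Applying the Snake Lemma to the lower two rows yields the exact sequence $0\to\ker\lambda_1\to\ker\lambda_2\to\ker\lambda_3\to\coker\lambda_1$. As $\coker\lambda_1=0$, the connecting homomorphism vanishes and the sequence truncates to a short exact one; identifying $\ker\lambda_1=K_y(G)$, $\ker\lambda_2=M_y(F)$ and $\ker\lambda_3=(J_y(G)\cdot\mathcal{O}_{r+n})/(J_y(G)\cdot\mathcal{O}_{\mathcal{X},0})$ gives precisely the claim.

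The only genuine input, as opposed to formal diagram chasing, is the chain $J_y(G)\cdot\mathcal{O}_{\mathcal{X},0}\subseteq J_y(G)\cdot\mathcal{O}_{r+n}\subseteq C(F)$ together with $\mathscr{C}(F)=\Pi^{-1}(C(F))$, and I expect securing these to be the main point. They are the relative analogues of the facts underlying Proposition~\ref{snake}, and follow once one observes that $F$ is again a finite, generically one-to-one germ of codimension one, so that Piene's Lemma~\ref{Piene} applies to it: writing $\partial G/\partial y_j\circ F=\Lambda M'_j$ as in the proof of Lemma~\ref{lem:jacob-source}, each generator of $J_y(G)\cdot\mathcal{O}_{r+n}$ is a multiple of the generator $\Lambda$ of $C(F)$, which gives the inclusion, while the conductor identity is the defining property of $\mathscr{C}(F)$. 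Note that, in contrast with Theorem~\ref{thmfamilia}, no flatness or Cohen--Macaulay hypothesis enters here, since we only manipulate the fixed modules over $\mathcal{O}_{r+n+1}$.
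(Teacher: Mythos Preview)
Your proposal is correct and follows exactly the approach the paper indicates: the paper merely states that the proof is based on a commutative diagram analogous to that of Proposition~\ref{snake} with the modules replaced by their relative versions, and this is precisely what you carry out. Your explicit verification of the needed inclusions $J_y(G)\cdot\mathcal{O}_{\mathcal{X},0}\subseteq J_y(G)\cdot\mathcal{O}_{r+n}\subseteq C(F)$ via Piene's Lemma applied to $F$ fills in the only point the paper leaves implicit.
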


\begin{rem}
By using analogous arguments to those of Theorem~\ref{thmfamilia}, it is not difficult to prove that the module on the right hand side of the above exact sequence specialises to the module which controls the $\mathscr A_e$-codimension when $n\ge 2$. More precisely, we have
$$
\frac{J_y(G)\cdot \mathcal O_{r+n}}{J_y(G)\cdot \mathcal O_{\mathcal X,0}}\otimes
\dfrac{\cO_{r}}{\km_r}\cong
\frac{J(g)\cdot \mathcal O_{n}}{J(g)\cdot \mathcal O_{X,0}}.
$$
The proof follows easily by using the short exact sequence:
$$
0\longrightarrow \frac{J_y(G)\cdot \mathcal O_{r+n}}{J_y(G)\cdot \mathcal O_{\mathcal X,0}}\longrightarrow \frac{C(F)}{J_y(G)\cdot \mathcal O_{\mathcal X,0}}\longrightarrow \frac{C(F)}{J_y(G)\cdot \mathcal O_{r+n}}\longrightarrow 0
$$
After tensoring with $\mathcal O_r/\km_r$ and taking into account that the module on the right hand side is $\mathcal O_r$-flat, we get the desired result.

It is not true in general that the module $K_y(G)$ specialises to $K(g)$. That is, we may have that 
$$K_y(G)\otimes\frac{\cO_r}{\km_r}\not\cong K(g).
$$
In fact, by using the short exact sequence of Proposition \ref{snake-family}, if $K_y(G)\otimes\frac{\cO_r}{\km_r}\cong K(g)$, this would imply that $\frac{J_y(G)\cdot \mathcal O_{r+n}}{J_y(G)\cdot \mathcal O_{\mathcal X,0}}$ is $\mathcal O_r$-flat. But it is obvious that this module is not flat when $f$ is $\mathscr A$-finite and $F$ is a stabilisation of $f$, since it is supported only at the origin.
\end{rem}

\section{An equivalent description of the module $M(f)$}\label{algoritmo}

In this section we show a description of the modules $M(f)$ and $M_y(F)$ which is better suited for applications. Proposition \ref{M(f)} allows us to compute $M(f)$ easily using a computer algebra system, such as \textsc{Singular} \cite{singular}. Since $K(g)$ can be computed as well, from Corollary~\ref{M(f)=0} we obtain an expression for the $\mathscr A_e$-codimension of any map germ $f\colon (\C^n,S)\to(\C^{n+1},0)$, with $n\ge 2$.

\begin{prop}\label{M(f)}
Let $f\colon (\C^n,S)\to(\C^{n+1},0)$ be any map germ and $F$ any $r$-parameter unfolding of $f$. Then:
\begin{align*}
M(f)&=\dfrac{(f^*)^{-1}(J(g)\cdot \mathcal O_{n})}{J(g)},\\
M_y(F)&=\dfrac{(F^*)^{-1}(J(G)\cdot \mathcal O_{r+n})}{J_y(G)}. 
\end{align*}
\end{prop}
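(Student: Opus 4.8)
The plan is to unwind the definition of $M(f)$ as a kernel and to identify that kernel concretely. By definition, $M(f)$ is the kernel of the map $\bar\pi\colon \mathscr C(f)/J(g)\to C(f)/(J(g)\cdot\cO_n)$ induced by $\pi$. Since $f^*=i\circ\pi$, where $i$ is the injective inclusion $\cO_{X,0}\hookrightarrow\cO_n$, and the target conductor is viewed inside $\cO_n$, this map is simply $h+J(g)\mapsto f^*(h)+J(g)\cdot\cO_n$. Hence a class $h+J(g)$ of $\mathscr C(f)/J(g)$ lies in $M(f)$ precisely when $f^*(h)\in J(g)\cdot\cO_n$, so that
$$M(f)=\frac{\{h\in\mathscr C(f):\ f^*(h)\in J(g)\cdot\cO_n\}}{J(g)}.$$

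The key step is to observe that the membership condition $h\in\mathscr C(f)$ is redundant once $f^*(h)\in J(g)\cdot\cO_n$. Indeed, by Lemma \ref{Piene} we have $J(g)\cdot\cO_n\subseteq C(f)$, so $f^*(h)\in J(g)\cdot\cO_n$ forces $f^*(h)\in C(f)$; as $C(f)\subset\cO_{X,0}$ and $i$ is injective, this gives $\pi(h)\in C(f)$, i.e. $h\in\pi^{-1}(C(f))=\mathscr C(f)$. Therefore $\{h\in\mathscr C(f):\ f^*(h)\in J(g)\cdot\cO_n\}=(f^*)^{-1}(J(g)\cdot\cO_n)$, and since $f^*(J(g))\subseteq J(g)\cdot\cO_n$ ensures $J(g)\subseteq (f^*)^{-1}(J(g)\cdot\cO_n)$, the quotient is well defined and equals the claimed expression $(f^*)^{-1}(J(g)\cdot\cO_n)/J(g)$.

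For $M_y(F)$ I would run the identical argument with $F^*$, $\mathscr C(F)$, $C(F)$ and $J_y(G)$ in place of $f^*$, $\mathscr C(f)$, $C(f)$ and $J(g)$, using $J_y(G)\cdot\cO_{r+n}\subseteq C(F)$ (again from Piene's Lemma applied to $F$) to make the $\mathscr C(F)$-condition automatic. This yields
$$M_y(F)=\frac{(F^*)^{-1}(J_y(G)\cdot\cO_{r+n})}{J_y(G)}.$$
The only extra ingredient needed to reach the stated form is Lemma \ref{lem:jacob-source}, which asserts $J_y(G)\cdot\cO_{r+n}=J(G)\cdot\cO_{r+n}$; substituting this identity in the numerator replaces the relative Jacobian by the full Jacobian and gives $M_y(F)=(F^*)^{-1}(J(G)\cdot\cO_{r+n})/J_y(G)$, as required.

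I do not anticipate a serious obstacle: the content is essentially the elementary computation of the kernel of a morphism between quotients. The one point requiring care is the redundancy of the $\mathscr C(f)$ (resp. $\mathscr C(F)$) condition, which hinges entirely on the inclusion $J(g)\cdot\cO_n\subseteq C(f)$ from Piene's Lemma. For the unfolding case, the mild subtlety worth flagging is that the numerator features the full Jacobian $J(G)$ whereas the denominator keeps the relative Jacobian $J_y(G)$; reconciling these is exactly the role played by Lemma \ref{lem:jacob-source}, so it is essential to invoke it precisely at the numerator.
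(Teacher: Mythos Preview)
Your proof is correct and follows essentially the same route as the paper: identify $M(f)$ with $\bigl((f^*)^{-1}(J(g)\cdot\cO_n)\cap\mathscr C(f)\bigr)/J(g)$ and then use Piene's Lemma to show $(f^*)^{-1}(J(g)\cdot\cO_n)\subset\mathscr C(f)$, making the intersection redundant; for the unfolding case you correctly invoke Lemma~\ref{lem:jacob-source} exactly as the paper does. Your write-up is simply a more detailed version of the paper's terse argument.
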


\begin{proof} By construction, $M(f)$ is given by
$$
M(f)=\frac{(f^*)^{-1}(J(g)\cdot \mathcal O_n)\cap \mathscr C(f)}{J(g)}.
$$
But Lemma \ref{Piene} implies the inclusion $J(g)\cdot \mathcal O_n\subset C(f)$, hence we have the inclusion 
$$(f^*)^{-1}(J(g)\cdot \mathcal O_n)\subset (f^*)^{-1}(C(f))=\mathscr C(f).$$
The proof for $M_y(F)$ is analogous, {taking into account that $J_y(G)\cdot\mathcal O_{r+n}$ equals $J(G)\cdot\mathcal O_{r+n}$ by Lemma~\ref{lem:jacob-source}.}
\end{proof}

\begin{cor}\label{corFomulaM_y(F)} Let $F$ be a stable unfolding of $f$. Then, 
$$M_y(F)=\frac{J(G)+\langle G\rangle}{J_y(G)}.$$
\end{cor}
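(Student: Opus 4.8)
The plan is to start from the description of $M_y(F)$ furnished by Proposition~\ref{M(f)}, namely
$$
M_y(F)=\frac{(F^*)^{-1}(J(G)\cdot \mathcal O_{r+n})}{J_y(G)},
$$
so that the corollary reduces to establishing the ideal equality
$$
(F^*)^{-1}(J(G)\cdot \mathcal O_{r+n})=J(G)+\langle G\rangle
$$
inside $\mathcal O_{r+n+1}$. I would prove the two inclusions separately. Note first that using the full Jacobian $J(G)$ in the extended ideal of the numerator is harmless, since $J_y(G)\cdot \mathcal O_{r+n}=J(G)\cdot \mathcal O_{r+n}$ by Lemma~\ref{lem:jacob-source}; this is precisely what makes the two presentations of the numerator agree.

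The inclusion $\supseteq$ is immediate. Every generator of $J(G)$ maps into $J(G)\cdot \mathcal O_{r+n}$ under $F^*$ by definition of the extended ideal, and $F^*(G)=G\circ F=0$ because the image of $F$ lies in the hypersurface $\mathcal X=\{G=0\}$; hence both $J(G)$ and $\langle G\rangle$ lie in the preimage.

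The reverse inclusion $\subseteq$ is the heart of the matter, and the only place where stability is genuinely used. Since $F$ is a stable unfolding it is $\mathscr A$-stable, so $\mathscr A_e$-$\codim(F)=0$. Applying Mond's formula (Lemma~\ref{IsoMondACodim}) to $F\colon(\C^{r+n},\{0\}\times S)\to(\C^{r+n+1},0)$, which is legitimate as $r+n\ge 2$ and $F$ is finite and generically one-to-one, this vanishing translates into the equality
$$
J(G)\cdot \mathcal O_{r+n}=J(G)\cdot \mathcal O_{\mathcal X,0}
$$
of ideals inside $\mathcal O_{r+n}$, where $\mathcal O_{\mathcal X,0}$ is viewed as a subring of $\mathcal O_{r+n}$ via the normalization. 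Now take $h\in\mathcal O_{r+n+1}$ with $F^*(h)\in J(G)\cdot \mathcal O_{r+n}$. Factoring $F^*=i\circ\pi$, with $\pi\colon\mathcal O_{r+n+1}\twoheadrightarrow\mathcal O_{\mathcal X,0}$ and $i\colon\mathcal O_{\mathcal X,0}\hookrightarrow\mathcal O_{r+n}$ injective, the displayed equality forces $\pi(h)$ to lie in the ideal $J(G)\cdot \mathcal O_{\mathcal X,0}$ of $\mathcal O_{\mathcal X,0}$. Writing $\pi(h)$ as an $\mathcal O_{\mathcal X,0}$-combination of the classes $\pi(\partial G/\partial u_i)$ and $\pi(\partial G/\partial y_j)$, lifting the coefficients arbitrarily to $\mathcal O_{r+n+1}$ and subtracting, I obtain an element of $\ker\pi=\langle G\rangle$; hence $h\in J(G)+\langle G\rangle$, as required.

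The crux is thus the passage from the stability of $F$ to the ideal identity $J(G)\cdot \mathcal O_{r+n}=J(G)\cdot \mathcal O_{\mathcal X,0}$ through Mond's lemma; once that is in hand, everything else is a pull-back across the injection $i$ together with a routine lifting of generators through $\pi$. The one point demanding a little care is checking that Mond's formula applies to the unfolding $F$ with its larger source dimension, which it does for $n\ge 2$.
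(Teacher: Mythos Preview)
Your proof is correct and follows essentially the same route as the paper: both arguments reduce to showing $(F^*)^{-1}(J(G)\cdot\mathcal O_{r+n})=J(G)+\langle G\rangle$ via Proposition~\ref{M(f)}, and both use stability of $F$ together with Lemma~\ref{IsoMondACodim} to force $J(G)\cdot\mathcal O_{r+n}=J(G)\cdot\mathcal O_{\mathcal X,0}$. The paper packages this last step by invoking Proposition~\ref{snake} (so that $M(F)=K(G)$ directly), whereas you unpack it explicitly with the factorisation $F^*=i\circ\pi$ and a lift through $\pi$; the logical content is the same.
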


\begin{proof}
Since $F$ is stable, $M(F)=K(G)$ by {Proposition~\ref{snake}}. Now, the first part Proposition~\ref{M(f)} implies that $(F^*)^{-1}(J(G)\cdot \mathcal O_{r+n}))=J(G)+\langle G\rangle$ and the result follows from 
the second part of Proposition~\ref{M(f)}.
\end{proof}

\begin{defn} Let $F$ be an unfolding of $f$. We say that $G$ is a \emph{good defining equation} for $F$ if $G=0$ is a 
reduced equation of the image of $F$ and moreover $G\in J(G)$.
\end{defn}

Note that there always exists a stable unfolding $F$ which admits a good defining equation. In fact, if $F(u,x)=(u,f_u(x))$ is any $r$-parameter stable unfolding, then we take $F'$ as the 1-parameter trivial unfolding of $F$, that is, $F'(t,u,x)=(t,u,f_u(x))$. Let $G=0$ be a reduced equation of the image of $F$ and take $G'(t,u,y)=e^t G(u,y)$. Then $G'=0$ is a reduced equation of the image of $F'$ and $\partial G'/\partial t=G'$, hence $G'\in J(G')$.

\begin{cor} Let $F$ be a stable unfolding of $f$ and $G$ a good defining equation for $F$. Then, 
$$M_y(F)=\frac{J(G)}{J_y(G)}.$$
\end{cor}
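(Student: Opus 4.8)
The plan is to reduce immediately to the preceding Corollary~\ref{corFomulaM_y(F)}, which already computes $M_y(F)$ for a stable unfolding as
$$
M_y(F)=\frac{J(G)+\langle G\rangle}{J_y(G)}.
$$
Since $F$ is stable by hypothesis, this formula applies verbatim, and the only remaining task is to absorb the extra generator $G$ into the full Jacobian ideal $J(G)$.

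First I would unpack the definition of a good defining equation: by assumption $G=0$ is a reduced equation of the image of $F$ and, crucially, $G\in J(G)$. This last inclusion gives $\langle G\rangle\subseteq J(G)$, whence $J(G)+\langle G\rangle=J(G)$ as ideals of $\mathcal O_{r+n+1}$. Substituting this equality into the expression above yields $M_y(F)=J(G)/J_y(G)$, where the quotient is well defined because $J_y(G)\subseteq J(G)$ (the relative Jacobian ideal, generated only by the $y$-derivatives, is contained in the full Jacobian ideal).

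The essentially only point is the clean interplay between the two hypotheses: stability of $F$ supplies Corollary~\ref{corFomulaM_y(F)}, while the good-defining-equation condition $G\in J(G)$ is exactly what renders the generator $\langle G\rangle$ redundant. I therefore do not anticipate any genuine obstacle; the statement is a direct specialisation of the previous corollary under the extra hypothesis $G\in J(G)$, and the verification that $J(G)+\langle G\rangle=J(G)$ is the sole substantive (and elementary) step.
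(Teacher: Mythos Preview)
Your proposal is correct and matches the paper's intended argument: the paper states this corollary without proof, treating it as an immediate consequence of Corollary~\ref{corFomulaM_y(F)} together with the defining condition $G\in J(G)$, exactly as you outline.
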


\begin{cor}\label{nf0}Let $f\colon (\C^n,S)\to(\C^{n+1},0)$ be $\mathscr A$-finite, with $n\ge 2$ and $(n,n+1)$ nice dimensions. Let $F$ be a stable unfolding of $f$, then
$$\mathscr A_e\text{-}\codim(f)=\dim_{\C}\left(\frac{J(G)+\langle G\rangle}{J_y(G)}\otimes\frac{\cO_r}{\km_r}\right)-\dim_\C\left(\frac{J(g)+\langle g\rangle}{J( g)}\right).$$

\begin{proof} It follows immediately by putting together Corollary \ref{nice}, Theorem~\ref{thmfamilia}, 
and Corollary \ref{corFomulaM_y(F)}.
\end{proof}

	\end{cor}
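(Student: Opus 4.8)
The plan is to derive the identity by chaining together three facts established earlier, each of which replaces one quantity in the displayed formula, so that the proof becomes a formal assembly rather than a new computation. First I would read off the $\mathscr A_e$-codimension from the absolute module $M(f)$. Since $f$ is $\mathscr A$-finite with $n\ge 2$ and $(n,n+1)$ nice, Remark~\ref{nice} guarantees that $\dim_\C M(f)<\infty$ automatically, so part~(2) of Corollary~\ref{M(f)=0} applies and yields
$$
\mathscr A_e\text{-}\codim(f)=\dim_\C M(f)-\dim_\C K(g),
$$
where $K(g)=(\langle g\rangle+J(g))/J(g)$. The subtracted term $\dim_\C K(g)$ is exactly $\dim_\C\left(\frac{J(g)+\langle g\rangle}{J(g)}\right)$, so the task reduces to identifying $\dim_\C M(f)$ with the first dimension on the right-hand side.

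Next I would specialise the parameter in order to pass from the absolute module to the relative one. Theorem~\ref{thmfamilia} supplies the isomorphism $M(f)\cong M_y(F)\otimes(\cO_r/\km_r)$ for any unfolding $F$, whence $\dim_\C M(f)=\dim_\C\left(M_y(F)\otimes\frac{\cO_r}{\km_r}\right)$. Finally, because $F$ is assumed to be a \emph{stable} unfolding, Corollary~\ref{corFomulaM_y(F)} provides the explicit presentation $M_y(F)=(J(G)+\langle G\rangle)/J_y(G)$. Substituting this presentation and then combining with the first step produces the claimed identity at once.

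There is no genuine obstacle at this stage, since all the analytic content has been discharged beforehand; the only point deserving a moment's care is the interplay of the finiteness hypotheses, so that every vector space whose dimension is written down is indeed finite-dimensional. This is precisely what the nice-dimensions assumption buys through Remark~\ref{nice}: it forces $K(g)$ and $(J(g)\cdot\cO_n)/(J(g)\cdot\cO_{X,0})$ to be supported at the origin, hence $M(f)$ to be Artinian via the exact sequence of Proposition~\ref{snake}, and this finiteness transfers to $\dim_\C\left(M_y(F)\otimes\frac{\cO_r}{\km_r}\right)$ through the specialisation isomorphism. The substantive work---the flatness arguments underlying Theorem~\ref{thmfamilia} and the stable-unfolding identification of Corollary~\ref{corFomulaM_y(F)}---is already in hand, so the corollary follows immediately.
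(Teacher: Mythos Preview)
Your argument is correct and follows exactly the paper's approach: combine Remark~\ref{nice} (together with Corollary~\ref{M(f)=0}(2)) to express $\mathscr A_e\text{-}\codim(f)=\dim_\C M(f)-\dim_\C K(g)$, then apply Theorem~\ref{thmfamilia} and Corollary~\ref{corFomulaM_y(F)} to rewrite $\dim_\C M(f)$ as the first term on the right-hand side. Your additional remarks on finiteness are a welcome elaboration of what the paper leaves implicit.
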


	\begin{rem}\label{nf}
Observe that if $G$ is a good defining equation for $F$, then we have
 $$\mathscr A_e\text{-}\codim(f)=\dim_{\C}\left(\frac{J(G)}{J_y(G)}\otimes\frac{\cO_r}{\km_r}\right)-\dim_\C\left(\frac{J(g)+\langle g\rangle}{J( g)}\right).$$
 If, moreover, $f$ is  weighted homogeneous, then 
 $$\mathscr A_e\text{-}\codim(f)=\dim_{\C}\left(\frac{J(G)}{J_y(G)}\otimes\frac{\cO_r}{\km_r}\right).$$
 \end{rem}

\section{Flatness and the Cohen-Macaulay property}

In this section, we assume that $f:(\C^n,S)\to (\C^{n+1},0)$ is a germ such that $\dim_\C M(f)<\infty$
and $F$ is an $r$-parameter unfolding of $f$.  By Theorem~\ref{thmfamilia} and the Preparation Theorem, $M_y(F)$ is finite over $\mathcal O_r$. We consider a small enough representative $F\colon W\to T\times B_\epsilon$ with the properties required in Section~2 and such that the restriction of the projection onto the parameter space
$$
\pi:\supp \mathcal M_y(F)\to T
$$ is finite and $\pi^{-1}(0)=\{0\}$. Here $\mathcal M_y(F)$ is the coherent sheaf of modules on $T\times B_\epsilon$ whose stalk at the origin is $M_y(F)$. We also denote by $M_y(F)_{(u,p)}$ the stalk of $\mathcal M_y(F)$ at $(u,p)\in T\times B_\epsilon$. We have the following standard fact from commutative algebra and analytic geometry:



\begin{lem}\label{stdcomm}
The following assertions are equivalent:
\begin{enumerate}
 \item the module $M_y(F)$ is flat over $\mathcal O_r$,
 \item the module $M_y(F)$ is free over $\mathcal O_r$,
 \item the number 
 $$\Theta(u):=\sum_{p\in B_\epsilon} \dim_\C\left(M_y(F)_{(u,p)}\otimes\frac{\cO_{T,u}}{\km_{T,u}}\right)$$ 
 is independent of $u\in T$, where $\km_{T,u}$ denotes the maximal ideal 
of $\mathcal O_{T,u}$,
 \item the module $M_y(F)$ is Cohen-Macaulay of dimension $r$.
\end{enumerate}

\end{lem}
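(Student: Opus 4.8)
The plan is to organise the four conditions around the freeness of $M_y(F)$ as a finitely generated $\cO_r$-module, exploiting that $\cO_r$ is a regular local ring of dimension $r$ and that, by the Preparation Theorem together with the assumption $\dim_\C M(f)<\infty$, the projection $\pi$ is finite on $\supp\mathcal M_y(F)$ with $\pi^{-1}(0)=\{0\}$. The equivalence of (1) and (2) I would simply quote: a finitely generated flat module over a Noetherian local ring is free, and free modules are always flat.

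For (2) $\Leftrightarrow$ (4) I would invoke the Auslander--Buchsbaum formula over the regular local ring $\cO_r$,
\[
\pdim_{\cO_r}M_y(F)+\depth_{\cO_r}M_y(F)=\dim\cO_r=r,
\]
so that $M_y(F)$ is free (i.e.\ $\pdim_{\cO_r}M_y(F)=0$) if and only if $\depth_{\cO_r}M_y(F)=r$, that is, if and only if $M_y(F)$ is a maximal Cohen--Macaulay $\cO_r$-module. To match the formulation of (4), which refers to the Cohen--Macaulay property of $M_y(F)$ as a module over the ambient ring $\cO_{r+n+1}$, I would use that depth and dimension are preserved under the module-finite map $\cO_r\to\cO_{r+n+1}$ on the support: since $\km_r\cdot\cO_{r+n+1}$ is primary to the relevant maximal ideal, a maximal regular sequence may be drawn from $\km_r$ and computes the depth in both rings. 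Hence ``Cohen--Macaulay of dimension $r$'' over $\cO_{r+n+1}$ is the same as $\depth_{\cO_r}M_y(F)=r=\dim\cO_r$.

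For (2) $\Leftrightarrow$ (3) I would first read $\Theta$ geometrically: pushing $\mathcal M_y(F)$ forward along the finite projection $\pi$ yields a coherent $\cO_T$-module $\mathcal N$ whose stalk at $0$ is $M_y(F)$ (because $\pi^{-1}(0)=\{0\}$), and $\Theta(u)$ is precisely its fibre dimension $\dim_\C\bigl(\mathcal N_u\otimes \cO_{T,u}/\km_{T,u}\bigr)$; in particular $\Theta(0)=\dim_\C M(f)$ by Theorem~\ref{thmfamilia}. If $M_y(F)$ is free of rank $\rho$, then $\mathcal N$ is free of rank $\rho$ on a neighbourhood of $0$, whence $\Theta\equiv\rho$ and (2)$\Rightarrow$(3). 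Conversely, the fibre dimension of a coherent sheaf is always upper semicontinuous, and over the reduced (here smooth and connected) base $T$ a coherent sheaf with locally constant fibre dimension is locally free; applying this constant-rank theorem and passing to the stalk at $0$ gives (3)$\Rightarrow$(2).

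The steps that are not purely formal are the two change-of-ring passages: identifying $\Theta$ with the fibre dimension of the pushforward $\mathcal N$, and transporting the Cohen--Macaulay statement between $\cO_{r+n+1}$ and $\cO_r$. Both rest on the finiteness of $\pi$ on $\supp\mathcal M_y(F)$ and on $\pi^{-1}(0)=\{0\}$, features built into the choice of representative. The genuinely non-elementary input, and the step I expect to be the main obstacle, is the constant-rank theorem for coherent sheaves over a reduced base, which furnishes the implication (3)$\Rightarrow$(2); everything else is an assembly of standard commutative algebra.
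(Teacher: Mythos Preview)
The paper does not actually prove this lemma: it is introduced with the phrase ``We have the following standard fact from commutative algebra and analytic geometry'' and left without proof. Your proposal is therefore not in competition with any argument in the paper, but rather supplies the details the authors chose to omit.

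Your argument is essentially correct and organised in the natural way. The equivalence (1)$\Leftrightarrow$(2) is the standard fact that finitely generated flat modules over a Noetherian local ring are free; (2)$\Leftrightarrow$(4) via Auslander--Buchsbaum over the regular local ring $\cO_r$ is the right approach, and your remark about transporting depth along the finite extension $\cO_r\to\cO_{r+n+1}$ correctly handles the ambiguity in which ring ``Cohen--Macaulay of dimension $r$'' is meant; for (2)$\Leftrightarrow$(3) the identification of $\Theta(u)$ with the fibre dimension of the direct image $\pi_*\mathcal M_y(F)$ is exactly what finiteness of $\pi$ buys, and the constant-rank criterion for local freeness over a reduced base is the standard input. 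One small point worth making explicit in (2)$\Rightarrow$(4): since $M_y(F)$ is finite over $\cO_r$, its Krull dimension as an $\cO_r$-module is at most $r$, so $\depth_{\cO_r}M_y(F)=r$ forces $\dim M_y(F)=r$ as well (via $\depth\le\dim$), giving the full Cohen--Macaulay statement and not just maximality of depth.
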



Recall that we have denoted by $\mathcal X$ the image of $F$ and by $X_u$ the fibre of {$\mathcal X$} over $u\in T$. Given a point $p\in X_u\cap B_\epsilon$, in the next theorem we denote by $M(f_u)_p$ the module $M$ computed for the germ of $f_u$
at the point $p$.

\begin{thm}\label{Cohen-Macaulay} Let $F$ be an unfolding of a map germ 
$f\colon (\C^n,S)\to(\C^{n+1},0)$, with $\dim_\C M(f)<\infty$. We have the inequality:
$$
\dim_\C M(f)\ge\sum_{p\in X_u\cap B_\epsilon} \dim_\C M(f_u)_p+b_n(X_u\cap B_\epsilon),
$$
where $b_n(X_u\cap B_\epsilon)$ denotes the $n$-th Betti number of $X_u\cap B_\epsilon$.
Moreover, the equality holds if and only if the module $M_y(F)$ is Cohen-Macaulay of dimension $r$(equivalently, if it is $\mathcal O_r$-flat).
\end{thm}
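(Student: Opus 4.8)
The plan is to compare $\dim_\C M(f)$ with the function $\Theta(u)$ of Lemma~\ref{stdcomm}, identifying the right-hand side of the inequality as the value of $\Theta$ at a general parameter. First I would locate the two ends of the family. At $u=0$, since $\pi^{-1}(0)=\{0\}$, the fibre of $\supp\mathcal M_y(F)$ over the origin is the single point $(0,0)$, so $\Theta(0)=\dim_\C\big(M_y(F)\otimes\cO_r/\km_r\big)$, and Theorem~\ref{thmfamilia} gives $\Theta(0)=\dim_\C M(f)$.

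Next I would compute $\Theta(u)$ for $u\neq 0$ by specialising $M_y(F)$ at each point $(u,p)$ of its support. Viewing the germ of $F$ at $(u,p)$ as an unfolding of the multigerm of $f_u$ at $p$ and rerunning the argument of Theorem~\ref{thmfamilia} localised there — the $\cO_r$-flatness of $\cO_{r+n}/R(F)$ exploited in that proof holds over all of $T$, not merely at the origin — I expect $M_y(F)_{(u,p)}\otimes\cO_{T,u}/\km_{T,u}\cong M(f_u)_p$. I would then split the finite set $\supp\mathcal M_y(F)\cap(\{u\}\times B_\epsilon)$ into the points lying on $X_u$ and those off $X_u$. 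For $p\notin X_u$ the snake sequence of Proposition~\ref{snake} localises so that the quotient $J(g_u)\cO_n/J(g_u)\cO_{X,0}$ vanishes (there are no source points near $p$), leaving $M(f_u)_p\cong K(g_u)_p$; and since $g_u$ is a unit at such $p$, this is the full Milnor algebra $\cO_{n+1,p}/J(g_u)$, of dimension $\mu(g_u;p)$ (the same conclusion follows directly from $M(f)=(f^*)^{-1}(J(g)\cO_n)/J(g)$ in Proposition~\ref{M(f)}). Hence
$$\Theta(u)=\sum_{p\in X_u\cap B_\epsilon}\dim_\C M(f_u)_p+\sum_{p\in B_\epsilon\setminus X_u}\mu(g_u;p).$$

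To rewrite the second sum I would invoke Siersma's Theorem~\ref{Siersma}: hypothesis (1) holds by condition (4) in the choice of representative, and hypothesis (2) — that the critical points of $g_u$ off $X_u$ are isolated — follows because $\pi\colon\supp\mathcal M_y(F)\to T$ is finite, so these points, which lie in $\supp\mathcal M_y(F)$, form a finite subset of each fibre. Thus $\sum_{p\notin X_u}\mu(g_u;p)=b_n(X_u\cap B_\epsilon)$ and $\Theta(u)$ is exactly the right-hand side of the asserted inequality. Now $\pi_*\mathcal M_y(F)$ is a finite $\cO_T$-module with $\Theta(u)=\dim_\C\big((\pi_*\mathcal M_y(F))\otimes\cO_{T,u}/\km_{T,u}\big)$, so by upper semicontinuity of fibre dimension of a coherent sheaf (the function being integer-valued) one has $\Theta(0)\ge\Theta(u)$ for every $u\in T$; this is the inequality $\dim_\C M(f)\ge\Theta(u)$. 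Equality for a general $u$ means $\Theta$ attains its maximum $\Theta(0)$ at the generic value, forcing $\Theta$ to be constant, which by Lemma~\ref{stdcomm} is precisely the Cohen-Macaulayness (equivalently $\cO_r$-flatness) of $M_y(F)$; conversely flatness makes $\Theta$ constant and yields equality.

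The main obstacle I anticipate is the pointwise specialisation $M_y(F)_{(u,p)}\otimes\cO_{T,u}/\km_{T,u}\cong M(f_u)_p$ away from the central fibre: one must verify that the determinantal, Cohen-Macaulay input of Theorem~\ref{thmfamilia} — finiteness of $F$ forcing $\cO_{r+n}/R(F)$ to be determinantal of the expected dimension and hence $\cO_r$-flat — persists at an arbitrary $(u,p)$ where $f_u$ may be considerably more degenerate, together with the bookkeeping that cleanly separates the on-image contributions $\dim_\C M(f_u)_p$ from the off-image Milnor numbers harvested by Siersma's theorem.
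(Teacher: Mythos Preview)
Your proposal is correct and follows essentially the same approach as the paper: compare $\Theta(0)=\dim_\C M(f)$ with $\Theta(u)$ via upper semicontinuity, split the fibre sum into on-image and off-image contributions, identify the off-image part with $\sum\mu(g_u;p)$ and apply Siersma's theorem, then invoke Lemma~\ref{stdcomm} for the equality characterisation. The only cosmetic difference is that for $p\notin X_u$ the paper works directly with the relative sequence of Proposition~\ref{snake-family} (so that $M_y(F)_{(u,p)}=K_y(G)_{(u,p)}$ before specialising) rather than first specialising to the somewhat degenerate object $M(f_u)_p$ with empty source; your anticipated ``obstacle'' about the determinantal input persisting at arbitrary $(u,p)$ is not one, since the germ of $F$ there is again a finite unfolding and the paper simply says ``by the same reason''.
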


\begin{proof}
By the upper semicontinuity of $\Theta$ we have 
$$\Theta(0)\geq \Theta(u)$$
for any $u\in T$, with equality if and only if $M_y(F)$ is Cohen-Macaulay of dimension $r$, by
Lemma~\ref{stdcomm}. Let us identify both sides of the inequality.

The left hand side is equal to $\dim_\C M_y(F)\otimes(\cO_r/\km_r)=\dim_\C M(f)$ by Theorem~\ref{thmfamilia}. By the same reason, if $p\in X_u\cap B_\epsilon$, then 
$$M_y(F)_{(u,p)}\otimes\frac{\cO_{T,u}}{\km_{T,u}}=M(f_u)_p.$$ 
We split the sum $\Theta(u)$ as
$$\Theta(u)=\sum_{p\in X_u\cap B_\epsilon} \dim_\C M(f_u)_p+\sum_{p\in B_\epsilon\setminus X_u} \dim_\C \left(M_y(F)_{(u,p)}\otimes\frac{\cO_{T,u}}{\km_{T,u}}\right).$$

The first summand coincides with the first summand of the right hand side of the desired inequality. For the second summand we use the short exact sequence of Proposition \ref{snake-family}. If $p\in B_\epsilon\setminus X_u$, then
$$M_y(F)_{(u,p)}\otimes\frac{\cO_{T,u}}{\km_{T,u}}=K_y(G)_{(u,p)}\otimes\frac{\cO_{T,u}}{\km_{T,u}}=
\frac{\mathcal O_{T\times B_\epsilon,(u,p)}}{J_y(G)}\otimes\frac{\cO_{T,u}}{\km_{T,u}}=
\frac{\mathcal O_{B_\epsilon,p}}{J(g_u)},$$
which is the Jacobian algebra of $g_u$ at $p$. Thus, the second summand is equal to 
$$\sum_{p\in B_\epsilon\setminus X_u} \mu(g_u;p),$$
where $\mu(g_u;p)$ is the Milnor number of $g_u$ at $p$. By Siersma's Theorem \ref{Siersma}, 
the sum of all Milnor numbers $\mu(g_u;p)$, with $p\notin X_u$, is equal to the Betti number $b_n(X_u\cap B_\epsilon)$.
\end{proof}

The above theorem has two interesting particular cases, namely, when the unfolding $F$ is either a $C^0$-stabilisation or a stable unfolding.

\begin{cor}\label{Cohen-Macaulay-stb}
Let $F$ be un unfolding of a map germ 
$f\colon (\C^n,S)\to(\C^{n+1},0)$, with $\dim_\C M(f)<\infty$. Assume $F$ is either stable or a $C^0$-stabilisation and let $\mathcal B\subset T$ be the bifurcation set. 
For any $u\in T\setminus \mathcal B$ the following inequality holds
$$\dim_\C M(f)\ge\sum_{p\in X_u\cap B_\epsilon} \dim_\C M(f_u)_p+\mu_I(f),$$
with equality if and only if $M_y(F)$ is Cohen-Macaulay of dimension $r$ (equivalently $\mathcal O_r$-flat). {In the nice dimensions we obtain 
$$\dim_\C M(f)\ge\mu_I(f),$$
with equality if and only if $M_y(F)$ is Cohen-Macaulay of dimension $r$.}
\end{cor}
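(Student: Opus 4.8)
The plan is to obtain Corollary~\ref{Cohen-Macaulay-stb} as a direct specialisation of Theorem~\ref{Cohen-Macaulay}, whose inequality
$$
\dim_\C M(f)\ge\sum_{p\in X_u\cap B_\epsilon} \dim_\C M(f_u)_p+b_n(X_u\cap B_\epsilon)
$$
and whose equality characterisation (equality holds precisely when $M_y(F)$ is Cohen-Macaulay of dimension $r$, equivalently $\cO_r$-flat by Lemma~\ref{stdcomm}) we take as given. Only two things remain: to reinterpret the topological term $b_n(X_u\cap B_\epsilon)$ as the image Milnor number $\mu_I(f)$ for the chosen parameters $u\in T\setminus\mathcal B$, and, in the nice dimensions, to show that the local contributions $\dim_\C M(f_u)_p$ all vanish.

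First I would identify the Betti number. For $u\in T\setminus\mathcal B$, by definition of the bifurcation set the multigerm of $f_u$ at every point of $X_u\cap B_\epsilon$ is $C^0$-$\mathscr A$-stable, and $\mathscr A$-stable when $(n,n+1)$ are nice dimensions; this holds whether $F$ is stable or a $C^0$-stabilisation. As explained in the remark on the bifurcation set (using a curve $C\subset T$ joining $u$ to the origin with $C\cap\mathcal B=\{0\}$ to produce an honest $C^0$-stabilisation realising $f_u$), the space $X_u\cap B_\epsilon$ then has the homotopy type of a wedge of $n$-spheres, and the number of these spheres is independent of $u$ and of $F$ and equals $\mu_I(f)$ by Theorem~\ref{wedge}. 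Hence $b_n(X_u\cap B_\epsilon)=\mu_I(f)$, and substituting this into the inequality of Theorem~\ref{Cohen-Macaulay} yields the first displayed inequality of the corollary, with the equality statement inherited verbatim.

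Next I would treat the nice-dimensions simplification, where the aim is to discard the local term. For $u\in T\setminus\mathcal B$ in the nice dimensions, each multigerm of $f_u$ at a point $p\in X_u\cap B_\epsilon$ is $\mathscr A$-stable, hence (by Mather, as recalled in Remark~\ref{nice}) $\mathscr A$-equivalent to a weighted homogeneous germ. Choosing $g_u$ to be the weighted homogeneous defining equation of this $\mathscr A$-model, the Euler relation gives $g_u\in J(g_u)$, so Corollary~\ref{M(f)=0}(1) yields $M(f_u)_p=0$. Therefore $\sum_{p\in X_u\cap B_\epsilon}\dim_\C M(f_u)_p=0$, the inequality collapses to $\dim_\C M(f)\ge\mu_I(f)$, and, the discarded term being zero, equality still holds if and only if $M_y(F)$ is Cohen-Macaulay of dimension $r$.

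Since the genuine analytic content---the upper semicontinuity of $\Theta$, the flatness/Cohen-Macaulay dictionary of Lemma~\ref{stdcomm}, and the appeal to Siersma's theorem---is already packaged inside Theorem~\ref{Cohen-Macaulay}, the corollary is a formal consequence and presents no serious obstacle. The one point demanding care is the identification $b_n(X_u\cap B_\epsilon)=\mu_I(f)$: one must ensure that a generic parameter $u\notin\mathcal B$ as in Theorem~\ref{Cohen-Macaulay} is one for which the wedge-of-spheres count genuinely computes $\mu_I$, i.e. that this count is independent of $u$ and of the choice of stable or $C^0$-stable unfolding. This independence is exactly Theorem~\ref{wedge} combined with the bifurcation-set remark, so no new argument is needed, but it is the place where, outside the nice dimensions, one must keep the distinction between stabilisations and $C^0$-stabilisations in mind.
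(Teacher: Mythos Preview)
Your proposal is correct and matches the paper's approach: the corollary is stated in the paper without an explicit proof, since it follows directly from Theorem~\ref{Cohen-Macaulay} by the two observations you make---the identification $b_n(X_u\cap B_\epsilon)=\mu_I(f)$ for $u\notin\mathcal B$ via Theorem~\ref{wedge} and the bifurcation-set remark, and the vanishing of $M(f_u)_p$ in the nice dimensions via Corollary~\ref{M(f)=0}(1) and the weighted-homogeneity of stable germs (the paper records this last point explicitly in Remark~\ref{nice} and again in the remark following the corollary).
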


\begin{rem}
Since $\km_{r}$ is an ideal of parameters of $M_y(F)$, if we denote by $e(\km_{r};M_y(F))$ the multiplicity of $M_y(F)$
with respect to $\km_{r}$ we have 
$$
\dim_\C M(f)\ge e(\km_{r};M_y(F)),
$$ 
with equality if and only if $M_y(F)$ is Cohen-Macaulay of dimension $r$. 

The function $\Theta$ is Zariski upper-semicontinuous over $T$.
By genericity of flatness we know that there exists a proper closed subset $\Delta\subset T$ such that  $M_y(F)$ is flat over $T\setminus\Delta$ and such that the $\Theta$ jumps up 
precisely at $u\in \Delta$.

For all $u\notin \Delta$, by conservation of multiplicity, we have the equalities
\begin{align*}
e(\km_{r};M_y(F))&=\sum_{p\in B_\epsilon} e(\mathfrak m_{T,u}; M_y(F)_{(u,p)})\\
         &=\sum_{p\in B_\epsilon} \dim_\C \left(M_y(F)_{(u,p)}\otimes\frac{\cO_{T,u}}{\km_{T,u}}\right)\\
         &=\sum_{p\in X_u\cap B_\epsilon} \dim_\C M(f_u)_p+\sum_{p\in B_\epsilon\setminus X_u}\mu(g_u;p)\\
         &=\sum_{p\in X_u\cap B_\epsilon} \dim_\C M(f_u)_p+b_n(X_u\cap B_\epsilon).
\end{align*}
If, furthermore $u\notin \mathcal B\cup\Delta$, we have the equality 
$$e(\km_{r};M_y(F))=\sum_{p\in X_u\cap B_\epsilon} \dim_\C M(f_u)_p+\mu_I(f).$$

This last equality can be rewritten as 
$$
e(\km_{r};M_y(F))-\mu_I(f)=\sum_{p\in X_u\cap B_\epsilon} \dim_\C M(f_u)_p.
$$
Thus, the sum on the right hand side is independent of the generic parameter $u$. It can be proved that it is also 
independent of the stable unfolding. In fact, if we have another stable unfolding $F'$ with parameters $v_1,\dots,v_s$, 
then we can take $F''$ as the sum of the two unfoldings $F,F'$. Since $F'$ is stable, $F''$ is trivial and we can assume 
that $F''$ is constant on the parameters $v_1,\dots,v_r$. Then, $v_1,\dots,v_r$ are a regular sequence for $M_y(F'')$, so
\begin{align*}
e(\langle u_1,\dots,u_r,v_1,\dots,v_s\rangle;M_y(F''))&=e(\langle u_1,\dots,u_r\rangle;\frac{M_y(F'')}{\langle v_1,\dots,v_s\rangle M_y(F'')})\\&=e(\langle u_1,\dots,u_r\rangle;M_y(F)).
\end{align*}
We define:
$$
\alpha(f):= \sum_{p\in X_u\cap B_\epsilon} \dim_\C M(f_u)_p,
$$
where $F$ is any stable unfolding of $f$ and $u\notin \mathcal B\cup\Delta$.

Note that $\alpha(f)=0$ when $(n,n+1)$ are nice dimensions, since in that case all the singularities of $f_u$ are $\mathscr A$-stable and are $\mathscr A$-equivalent to a weighted homogeneous map germ. 
\end{rem}
%
%
%

We can prove now two of our main results:

\begin{thm}\label{main1}
Let $f\colon (\C^n,S)\to(\C^{n+1},0)$ be a germ with $\dim_\C M(f)<\infty$. 
The following statements are equivalent:
\begin{enumerate}
\item $\dim_\C M(f)=\alpha(f)+\mu_I(f)$,
\item $M_y(F)$ is Cohen-Macaulay of dimension $r$ for some stable unfolding,
\item $M_y(F)$ is Cohen-Macaulay of dimension $r$ for any stable unfolding,
\item $M_y(F)$ is Cohen-Macaulay of dimension $1$ for any $C^0$-stabilisation,
\item $\cO_{{r+n+1}}/J_y(G)$ is $\mathcal O_r$-flat for some stable unfolding,
\item $\cO_{{r+n+1}}/J_y(G)$ is $\mathcal O_r$-flat for any stable unfolding,
\item $\cO_{{n+2}}/J_y(G)$ is $\mathcal O_1$-flat for any $C^0$-stabilisation.
\end{enumerate}
Suppose that $(n,n+1)$ are nice dimensions, then the following two further statements are equivalent to the previous
ones:
\begin{enumerate}
 \item[(8)] $M_y(F)$ is Cohen-Macaulay of dimension $1$ for some stabilisation,
 \item[(9)] $\cO_{{n+2}}/J_y(G)$ is $\mathcal O_1$-flat for some stabilisation.
\end{enumerate}
\end{thm}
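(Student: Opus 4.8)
The plan is to treat the nine conditions as three groups and to isolate a single local statement, proved one unfolding at a time, that ties each Cohen--Macaulay condition to the corresponding flatness condition; this will pair $(2)\Leftrightarrow(5)$, $(3)\Leftrightarrow(6)$, $(4)\Leftrightarrow(7)$ and $(8)\Leftrightarrow(9)$ simultaneously. Concretely, for a fixed $r$-parameter unfolding $F$ I would use the description of $M_y(F)$ from Proposition~\ref{M(f)} to write the short exact sequence
$$
0\longrightarrow M_y(F)\longrightarrow \frac{\cO_{r+n+1}}{J_y(G)}\longrightarrow Q\longrightarrow 0,\qquad Q:=\frac{\cO_{r+n+1}}{(F^*)^{-1}\bigl(J(G)\cdot\cO_{r+n}\bigr)},
$$
and show that the cokernel $Q$ is always $\cO_r$-flat. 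Since $J(G)\cdot\cO_{r+n}\subseteq C(F)\subseteq\cO_{\mathcal X,0}=\im F^*$, one identifies $Q\cong\cO_{\mathcal X,0}/J(G)\cdot\cO_{r+n}$, which fits into an exact sequence with ends $C(F)/J(G)\cdot\cO_{r+n}\cong\cO_{r+n}/R(F)$ and $\cO_{\mathcal X,0}/C(F)\cong\cO_{r+n+1}/\mathscr C(F)$; both are $\cO_r$-flat by the determinantal arguments already carried out in the proofs of Lemma~\ref{tensorsCCaligraphic} and Theorem~\ref{thmfamilia}. With $Q$ flat, the long exact $\Tor$-sequence gives $\Tor_1^{\cO_r}(M_y(F),\cO_r/\km_r)\cong\Tor_1^{\cO_r}(\cO_{r+n+1}/J_y(G),\cO_r/\km_r)$, and the local criterion of flatness, applicable since both are finite $\cO_{r+n+1}$-modules and $\cO_r\to\cO_{r+n+1}$ is a local homomorphism of Noetherian local rings, converts this isomorphism into the equivalence of the two flatness statements; flatness equals the Cohen--Macaulay property by Lemma~\ref{stdcomm}.

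Next I would settle the numerical condition $(1)$ and the stable-unfolding cases. By Corollary~\ref{Cohen-Macaulay-stb}, for a fixed stable unfolding $F$ and generic $u$ one has $\dim_\C M(f)\ge\alpha(f)+\mu_I(f)$, with equality exactly when $M_y(F)$ is Cohen--Macaulay of dimension $r$. Because the right-hand side is an invariant of $f$ alone (the independence of $\alpha(f)$ from the stable unfolding being the content of the remark preceding this theorem), the equality $(1)$ is a condition on $f$ that does not refer to $F$; hence it holds if and only if $M_y(F)$ is Cohen--Macaulay for one, and therefore for every, stable unfolding. This yields $(1)\Leftrightarrow(2)\Leftrightarrow(3)$ at once, and combined with the first step also $(5)$ and $(6)$.

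It then remains to bring in the one-parameter conditions. Corollary~\ref{Cohen-Macaulay-stb} applies verbatim to a $C^0$-stabilisation $F$ (now $r=1$): equality holds iff $M_y(F)$ is Cohen--Macaulay of dimension $1$, the equality reading
$$\dim_\C M(f)=\sum_{p\in X_u\cap B_\epsilon}\dim_\C M(f_u)_p+\mu_I(f).$$
To identify this with $(1)$ I would realise the $C^0$-stabilisation as the restriction of a stable unfolding $F$ to a generic line (resp. curve) $\ell\subset T$ through the origin meeting the bifurcation set only at $0$, exactly as in the remark of Section~\ref{section:image}; along such a generic $\ell$ the multigerms occurring at parameters $t\ne0$ are the $C^0$-stable (resp. stable) germs of the generic fibre of $F$, so by conservation of the local numbers $\dim_\C M(f_u)_p$ the point-sum equals $\alpha(f)$. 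Thus the equality for the $C^0$-stabilisation is again $(1)$, giving $(1)\Leftrightarrow(4)$, and the first step then gives $(7)$. In the nice dimensions $\alpha(f)=0$ and $C^0$-stability coincides with stability, so a stabilisation exists and is a $C^0$-stabilisation; the same reasoning yields $(1)\Leftrightarrow(8)$ and, via the first step, $(8)\Leftrightarrow(9)$, closing the chain.

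I expect the obstacle to be twofold. The sharper algebraic point is the flatness of $Q$: one must verify that the identification $Q\cong\cO_{\mathcal X,0}/J(G)\cdot\cO_{r+n}$ and the flatness of both ends of its defining sequence really persist at the relative level, and that the local criterion of flatness is invoked correctly for a module that is finite over $\cO_{r+n+1}$ but \emph{not} over $\cO_r$. The more geometric obstacle is the conservation argument in the last step, namely that the point-sum $\sum_{p}\dim_\C M(f_u)_p$ computed along a generic one-parameter section agrees with $\alpha(f)$ computed in a full stable unfolding; this requires that the generic line meets only the top stratum of $T$, together with the additivity of multiplicities in the sense of conservation of multiplicity already exploited in the preceding remark.
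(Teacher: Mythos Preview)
Your overall architecture is sound and, in the parts concerning the flatness equivalences and the cycle $(1)\Leftrightarrow(2)\Leftrightarrow(3)$, essentially coincides with the paper's proof. Your single exact sequence $0\to M_y(F)\to \cO_{r+n+1}/J_y(G)\to Q\to 0$ simply composes the paper's two sequences; the flat pieces you feed in, namely $C(F)/J(G)\cdot\cO_{r+n}\cong\cO_{r+n}/R(F)$ and $\cO_{\mathcal X,0}/C(F)\cong\cO_{r+n+1}/\mathscr C(F)$, are exactly the ones the paper uses. Your worry about the local criterion is misplaced: the criterion only asks that the module be finitely generated over the \emph{target} ring $\cO_{r+n+1}$, not over $\cO_r$, and that is clear.

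There is, however, a genuine gap in your treatment of $(1)\Rightarrow(4)$. You propose to ``realise the $C^0$-stabilisation as the restriction of a stable unfolding to a generic line'', and then invoke conservation to get $\sum_p\dim_\C M(f_t)_p=\alpha(f)$. But condition~(4) quantifies over \emph{all} $C^0$-stabilisations, and an arbitrary one need not arise from a curve avoiding the locus $\Delta$ where $\Theta$ jumps; versality only realises it up to unfolding-isomorphism via \emph{some} base change $\gamma$, which you have no control over. So your conservation argument, as stated, only covers a special $C^0$-stabilisation, which suffices for $(4)\Rightarrow(1)$ but not for the converse.

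The paper closes this gap by routing through $(3)$ rather than arguing $(1)\Rightarrow(4)$ directly. Given an arbitrary $C^0$-stabilisation $F=(t,f_t)$, take any stable unfolding $F'$ and form the \emph{sum} $F''$ of $F$ and $F'$; since $F'$ is stable, so is $F''$, and by $(3)$ the module $M_y(F'')$ is Cohen--Macaulay. Now Corollary~\ref{Cohen-Macaulay-stb} gives the equality $\dim_\C M(f)=\sum_p\dim_\C M(f''_{(t,0)})_p+\mu_I(f)$ at \emph{every} parameter outside the bifurcation set of $F''$, in particular at $(t,0)$ for $t\neq 0$ (where $f''_{(t,0)}=f_t$ is $C^0$-stable). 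This is precisely the equality characterising the Cohen--Macaulay property of $M_y(F)$, and it is obtained without any genericity hypothesis on the given $C^0$-stabilisation. You could alternatively salvage your line of argument by observing that $(1)\Rightarrow(3)$ forces $\Delta=\emptyset$ for every stable unfolding, after which conservation applies to any curve missing only $\mathcal B$; but you would still need to justify that $M_y$ is invariant under unfolding-isomorphism to pass through versality. The paper's sum construction avoids this entirely.
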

\begin{proof}
The equivalences between (1), (2) and (3) follow immediately from the previous results. Let us see (3) $\Rightarrow$ (4). Let $F$ be any $C^0$-stabilisation of $f$ given by $F=(t,f_t)$. We take $F'=(u,f'_u)$ a stable unfolding and consider $F''=(t,u,f''_{t,u})$ as the sum of the two unfoldings $F,F'$. Since $F'$ is stable, $F''$ is also stable and $M_y(F'')$ is Cohen-Macaulay of dimension $r+1$ by hypothesis. For any $t\ne0$, the germ of $f_t=f''_{t,0}$ is $C^0$-stable at any $p\in X_t$, hence $(t,0)\notin\mathcal B$, the bifurcation set of $F''$. By Corollary \ref{Cohen-Macaulay-stb},
$$
\dim_\C M(f)=\sum_{p\in X_t\cap B_\epsilon} \dim_\C M(f_t)_p+\mu_I(f),
$$
therefore $M_y(F)$ is also Cohen-Macaulay of dimension $1$.

We prove now that (4) $\Rightarrow$ (1). Let $F$ be any stable unfolding given by $F=(u,f_u)$. 
Choose an analytic path germ $\gamma:(\C,0)\to (\C^r,0)$ such that $\gamma(t)\notin\mathcal B\cup\Delta$, for any $t\ne0$. The unfolding $\gamma^*F$ induced by $\gamma$ is a $C^0$-stabilisation, hence $M_y(\gamma^*F)$ is Cohen-Macaulay of dimension $1$. By Corollary \ref{Cohen-Macaulay-stb}, for any $u=\gamma(t)$ with $t\ne0$,
$$
\dim_\C M(f)=\sum_{p\in X_{u}\cap B_\epsilon} \dim_\C M(f_{u})_p+\mu_I(f)=\alpha(f)+\mu_I(f).
$$

The implication (4) $\Rightarrow$ (8) is trivial and if $(n,n+1)$ are nice dimensions, then $\alpha(f)=0$, hence we also have (8) $\Rightarrow$ (1), by Corollary \ref{Cohen-Macaulay-stb}.

The remaining equivalences are all of them a consequence of the fact that $M_y(F)$ is flat over $\mathcal O_r$ if and only if $\cO_{r+n+1}/J_y(G)$ is flat over $\mathcal O_r$. In fact, 
we consider the exact sequence defining $M_y(F)$:
$$
0\longrightarrow M_y(F)\longrightarrow\frac{\mathscr C (F)}{J_y(G)}\longrightarrow\frac{C (F)}{J_y(G)\cdot \cO_{r+n}}\longrightarrow 0.$$
Since the last module is $\cO_r$-flat by Lemma \ref{lem:jacob-source} and Remark \ref{rem:ramif} the first module is $\cO_r$-flat if and only if the  second is. Now consider the exact sequence:
$$0\longrightarrow \frac{\mathscr C (F)}{J_y(G)}\longrightarrow\frac{\cO_{r+n+1}}{J_y(G)}\longrightarrow\frac{\cO_{r+n+1}}{\mathscr C (F)}\longrightarrow 0.$$
The last module of the sequence is $\cO_r$-flat by Remark \ref{rem:determinantal}, and hence the 
$\cO_r$-flatness is equivalent for the first two modules of the sequence.
\end{proof}

\begin{defn}
We say that a germ $f$ \emph{has the Cohen-Macaulay property} if $\dim_\C M(f)<\infty$ and $M_y(F)$ is Cohen-Macaulay of dimension $r$
for some stable unfolding $F$.
\end{defn}

Here is the relation with Mond's conjecture:

\begin{thm}\label{main2}
Let $f\colon (\C^n,S)\to(\C^{n+1},0)$ be $\mathscr A$-finite, with $(n,n+1)$ nice dimensions and $n\ge 2$. If $f$ has the Cohen-Macaulay property then $f$ satisfies Mond's conjecture. Moreover, if $f$ is 
weighted homogeneous, then the converse is true.
\end{thm}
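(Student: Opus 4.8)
The plan is to reduce the statement to Theorem~\ref{main1} together with the dimension formula of Corollary~\ref{M(f)=0}, so that no new analytic input is needed. First I would record the basic inequality. Since $(n,n+1)$ are nice dimensions and $f$ is $\mathscr A$-finite, Remark~\ref{nice} guarantees $\dim_\C M(f)<\infty$, so part~(2) of Corollary~\ref{M(f)=0} applies (here $n\ge 2$ is used) and gives
$$
\dim_\C M(f)=\mathscr A_e\text{-}\codim(f)+\dim_\C K(g).
$$
As $\dim_\C K(g)\ge 0$, this yields $\mathscr A_e\text{-}\codim(f)\le \dim_\C M(f)$ unconditionally.

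For the forward implication I observe that the Cohen-Macaulay property is, by definition, statement~(2) of Theorem~\ref{main1}, hence equivalent to statement~(1), namely $\dim_\C M(f)=\alpha(f)+\mu_I(f)$. Because we are in the nice dimensions, $\alpha(f)=0$, so $\dim_\C M(f)=\mu_I(f)$; combining this with the inequality above gives Mond's inequality $\mathscr A_e\text{-}\codim(f)\le\mu_I(f)$. To obtain the equality statement when $f$ is weighted homogeneous, I would choose $g$ weighted homogeneous of degree $d$ with weights $w_i$; Euler's relation
$$
\sum_i w_i\,y_i\,\frac{\partial g}{\partial y_i}=d\,g
$$
then shows $g\in J(g)$, so $K(g)=(\langle g\rangle+J(g))/J(g)=0$ and the formula above improves to $\mathscr A_e\text{-}\codim(f)=\dim_\C M(f)=\mu_I(f)$.

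For the converse I assume $f$ weighted homogeneous and that it satisfies Mond's conjecture, which in the weighted homogeneous case is the equality $\mathscr A_e\text{-}\codim(f)=\mu_I(f)$. The same Euler argument gives $K(g)=0$, so Corollary~\ref{M(f)=0} yields
$$
\dim_\C M(f)=\mathscr A_e\text{-}\codim(f)=\mu_I(f)=\alpha(f)+\mu_I(f),
$$
using $\alpha(f)=0$ once more. This is exactly statement~(1) of Theorem~\ref{main1}, which is equivalent to statement~(2), i.e.\ to the Cohen-Macaulay property; hence $f$ has the Cohen-Macaulay property.

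I do not expect a genuine obstacle here: all the analytic and commutative-algebra content has already been absorbed into Theorem~\ref{main1} (the flatness and Cohen-Macaulay equivalences, upper semicontinuity and conservation of multiplicity) and into the results it rests upon. The only points requiring care are bookkeeping ones: invoking Remark~\ref{nice} so that the finiteness hypothesis of Corollary~\ref{M(f)=0} is met, using $\alpha(f)=0$ in the nice dimensions, and observing that weighted homogeneity is precisely what forces $K(g)=0$ via Euler's relation, which is exactly what converts the inequality into an equality in both directions.
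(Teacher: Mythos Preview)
Your argument is correct and matches the paper's proof essentially step for step: both directions reduce to the equivalence $(1)\Leftrightarrow(2)$ of Theorem~\ref{main1} (the paper phrases this via Corollary~\ref{Cohen-Macaulay-stb}) together with Corollary~\ref{M(f)=0} and the vanishing of $\alpha(f)$ in the nice dimensions. Your explicit invocation of Euler's relation to force $K(g)=0$ in the weighted homogeneous case is exactly the point the paper leaves implicit.
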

\begin{proof}
Suppose that $f$ has the Cohen-Macaulay property. In Corollary~\ref{Cohen-Macaulay-stb} we proved the equality
$$
\dim_\C M(f)=\alpha(f)+\mu_I(f),
$$
but since we are in the nice dimensions, $\alpha(f)=0$. Mond's conjecture for $f$ follows now inmediately from Corollary \ref{M(f)=0}.

Suppose that $f$ is a weighted homogeneous germ satisfying Mond's conjecture, Then $\dim_\C M(f)=\mu_I(f)$ by Corollary \ref{M(f)=0}. Since $\alpha(f)=0$, Corollary~\ref{Cohen-Macaulay-stb} implies that $f$ has the Cohen-Macaulay property.
\end{proof}

\section{Reduction of Mond's conjecture to families of examples}
We exploit the results in previous section to reduce the general validity of Mond's conjecture for map germs to its validity in suitable families of examples.

We call multiplicity of a map germ $f\colon(\C^n,S)\to (\C^{n+1},0)$ to the minimum of the multiplicities of the components $(f_1,...,f_{n+1})$ of $f$ at all the points in $S$.

\begin{thm}\label{reduction1}
Assume that we are in the nice dimensions.
Suppose that for any natural number $M$ there exists a weighted homogenous $\mathscr A$-finite germ $h\colon (\C^n,S)\to (\C^{n+1},0)$
of multiplicity at least $M$ for which Mond's conjecture holds. Then Mond's conjecture holds for any $\mathscr A$-finite germ 
$f\colon(\C^n,S)\to (\C^{n+1},0)$.
\end{thm}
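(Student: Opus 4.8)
The plan is to reformulate everything in terms of the \emph{defect}
$\delta(f):=\dim_\C M(f)-\mu_I(f)$, which by Corollary~\ref{Cohen-Macaulay-stb} satisfies $\delta(f)\ge 0$ in the nice dimensions and vanishes exactly when $f$ has the Cohen--Macaulay property. By Theorem~\ref{main2} this property implies Mond's conjecture, so it suffices to prove that \emph{every} $\mathscr A$-finite $f$ has $\delta(f)=0$. The strategy is to show that a single good weighted homogeneous germ $h$ of sufficiently high multiplicity forces $\delta(f)=0$, by exhibiting the $\mathscr A$-class of $f$ inside a stable unfolding of $h$ and transporting the Cohen--Macaulay property.

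The heart of the argument is the following structural consequence, which I would isolate as the key lemma: \emph{if $h$ is good, then every local germ $h_{u,p}$ occurring in a stable unfolding $H$ of $h$ is again good.} To see this I would use that, by Lemma~\ref{stdcomm}, the Cohen--Macaulayness of $M_y(H)$ means that $\Theta(u)$ is constant, and by the identification in the proof of Theorem~\ref{Cohen-Macaulay} one has $\Theta(u)=\sum_{p\in X_u}\dim_\C M(h_u)_p+b_n(X_u\cap B_\epsilon)$ for every $u$. Comparing with a generic parameter, where $\Theta=\mu_I(h)$ by Corollary~\ref{Cohen-Macaulay-stb}, and using the conservation of the image Milnor number $\mu_I(h)=\sum_{p\in X_u}\mu_I(h_u;p)+b_n(X_u\cap B_\epsilon)$, subtraction yields $\sum_{p\in X_u}\delta(h_{u,p})=0$. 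As each term is nonnegative, every $h_{u,p}$ has defect zero, as claimed.

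It then remains to realise an arbitrary $\mathscr A$-finite $f$ as such a germ. Since $f$ is $\mathscr A$-finite it is $d$-$\mathscr A$-determined for some $d$; I would choose $M>d$ and invoke the hypothesis to obtain a good weighted homogeneous germ $h$ of multiplicity at least $M$. Working one branch at a time (the multigerm case being analogous), set $\psi_\epsilon:=(1-\epsilon)\,h+\epsilon\,f$. Because every component of $h$ has order $>d$, the $d$-jet of $\psi_\epsilon$ equals $\epsilon$ times that of $f$, so $\psi_0=h$ while $\psi_\epsilon\sim_{\mathscr A}f$ for $\epsilon\neq 0$. This is precisely where the multiplicity hypothesis is consumed. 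Now $\{\psi_\epsilon\}$ is an unfolding of $h$, and since the stable unfolding $H$ of $h$ is versal, $\psi_\epsilon$ is induced from $H$; hence the germ at the distinguished point, namely the $\mathscr A$-class of $f$, appears as some $h_{u,p}$. By the key lemma $f$ has defect zero, i.e. the Cohen--Macaulay property, and Theorem~\ref{main2} gives Mond's conjecture for $f$.

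The main obstacle I anticipate is the key lemma of the second paragraph: making rigorous that flatness of $M_y(H)$ propagates the equality $\dim_\C M=\mu_I$ to every germ in the family. This hinges on the compatibility of two distinct conservation statements over the full parameter space $T$, the constancy of $\Theta$ coming from Lemma~\ref{stdcomm} and the topological conservation of $\mu_I$ through the intermediate disentanglements, and on checking that the local image Milnor numbers $\mu_I(h_u;p)$ and local modules $M(h_u)_p$ match up termwise rather than merely in total. The realisation step is comparatively routine once versality of stable unfoldings and the jet bookkeeping are in place, the only delicate point being that the diffeomorphisms produced by versality preserve the $\mathscr A$-class of the germ at the chosen point.
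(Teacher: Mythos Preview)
Your proposal follows essentially the same route as the paper: choose a weighted homogeneous $h$ of high enough multiplicity so that Mond's conjecture (and hence, by Theorem~\ref{main2}, the Cohen--Macaulay property) holds for it; realise the $\mathscr A$-class of $f$ inside a stable versal unfolding $H$ of $h$ via the one-parameter family $h+tf$ (your $(1-\epsilon)h+\epsilon f$ is a harmless reparametrisation of this); and conclude that $f$ inherits the Cohen--Macaulay property, whence Mond's conjecture by Theorem~\ref{main2}.

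The only substantive difference is in how you justify your key lemma, that the Cohen--Macaulay property propagates from $h$ to every germ $h_{u,p}$ appearing in $H$. You argue by subtracting two conservation identities: the constancy of $\Theta$ (algebraic, from Lemma~\ref{stdcomm}) and the relation $\mu_I(h)=\sum_p\mu_I(h_u;p)+b_n(X_u\cap B_\epsilon)$ (topological). The paper does not invoke the second one. Once $M_y(H)$ is $\cO_r$-flat at the origin, $\Theta$ is constant on $T$; but the contribution to $\Theta(u)$ from a small neighbourhood of any fixed support point $(u_0,p_0)$ is itself upper semicontinuous, and a sum of upper semicontinuous functions that is constant forces each summand to be constant. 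Hence $M_y(H)_{(u_0,p_0)}$ is $\cO_{T,u_0}$-flat at every such point, and $H$ recentred there is a stable unfolding of $f$ witnessing the Cohen--Macaulay property. This is nothing more than openness of flatness for a coherent module finite over its base. The topological conservation of $\mu_I$ that you correctly flag as the main obstacle is therefore not needed; your route does work (that conservation holds), but it is a detour around a one-line argument.
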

\begin{proof}
Let $f\colon(\C^n,S)\to (\C^{n+1},0)$ be $\mathscr A$-finite. By finite determinacy we may assume, up to right-left equivalence, that $f$ is $M$-determined for a certain natural number $M$. 
Let $h$ be the germ predicted by hypothesis.
By $M$-determinacy, if we consider the $1$-parameter family of germs
$h_t:=h+tf\colon(\C^n,S)\to (\C^{n+1},0)$ we have that $h_t$ is equivalent to $f$ for any $t\neq 0$. 

Let $H$ be a stable and versal unfolding of $h$ parametrised by a base $T$. Since Mond's conjecture holds for $h$ and $h$ is weighted homogeneous, by Theorem~\ref{main2}, the module $M_y(H)$ is $T$-flat. By versality, and because 
$h_t$ is equivalent to $f$ for any $t\neq 0$ we have that $H$ is also a versal unfolding of $f$. Thus, applying again Theorem~\ref{main2}, we obtain Mond's conjecture for $f$.
\end{proof}

A similar result can be proved if we want to study maps of a certain corank. For simplicity, we state the result only for mono-germs, although it can be easily adapted to multi-germs, if necessary. An $\mathscr A$-finite $f\colon(\C^n,0)\to (\C^{n+1},0)$ of corank $r$ is right-left equivalent to a map germ whose components admit the normal form
$$(x_1,...,x_{n-r},f_{n-r+1},...,f_{n+1}).$$ We call the {\em corank-$r$ multiplicity} of $f$ the minimum of the multiplicities of $f_{n-r+1},...,f_{n+1}$. 

\begin{thm}\label{reduction2}
Assume that we are in the nice dimensions.
Suppose that for any natural number $M$ there exists a weighted homogenous $\mathscr A$-finite germ $h\colon(\C^n,0)\to (\C^{n+1},0)$ of corank $r$ and 
of corank-$r$ multiplicity at least $M$ for which Mond's conjecture holds. Then Mond's conjecture holds for any $\mathscr A$-finite germ of corank $r$.
$f\colon(\C^n,0)\to (\C^{n+1},0)$.
\end{thm}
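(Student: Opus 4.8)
The plan is to run the proof of Theorem~\ref{reduction1} again, adapting each step so that it respects the corank-$r$ normal form and uses the corank-$r$ multiplicity in place of the ordinary multiplicity. First I would put $f$ in its normal form $(x_1,\dots,x_{n-r},f_{n-r+1},\dots,f_{n+1})$, where the last $r+1$ components lie in $\mathfrak{m}^2$, so that $df_0$ has rank $n-r$. By finite determinacy, up to right-left equivalence I may assume $f$ is $M$-determined for some $M$. Applying the hypothesis with a value larger than $M$, I obtain a weighted homogeneous $\mathscr A$-finite germ $h$ of corank $r$ whose corank-$r$ multiplicity exceeds $M$ and for which Mond's conjecture holds; writing $h$ in normal form $(x_1,\dots,x_{n-r},h_{n-r+1},\dots,h_{n+1})$, the components $h_{n-r+1},\dots,h_{n+1}$ have multiplicity greater than $M$.

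The one genuine modification, compared with Theorem~\ref{reduction1}, is the choice of deforming family. Instead of $h+tf$, which would disturb the first $n-r$ coordinates, I would deform only the last $r+1$ components, setting
$$
h_t:=(x_1,\dots,x_{n-r},\,h_{n-r+1}+t f_{n-r+1},\dots,\,h_{n+1}+t f_{n+1}).
$$
Then $h_0=h$ and, since the perturbed components still lie in $\mathfrak{m}^2$, every $h_t$ has corank exactly $r$. For $t\neq 0$, rescaling the last $r+1$ target coordinates by $1/t$ shows that $h_t$ is $\mathscr A$-equivalent to $(x_1,\dots,x_{n-r},\,f_{n-r+1}+\tfrac{1}{t}h_{n-r+1},\dots,\,f_{n+1}+\tfrac{1}{t}h_{n+1})$, whose $M$-jet coincides with that of $f$ because each $h_j$ has multiplicity greater than $M$. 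By $M$-determinacy, $h_t$ is $\mathscr A$-equivalent to $f$ for every $t\neq 0$. Thus $h_t$ is a deformation, staying inside corank $r$, of the weighted homogeneous germ $h$ into $f$.

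From here the argument is formally identical to that of Theorem~\ref{reduction1}. I would take a stable versal unfolding $H$ of $h$ over a base $T$. Since $h$ is weighted homogeneous and satisfies Mond's conjecture, Theorem~\ref{main2} (through the Cohen-Macaulay property) gives that $M_y(H)$ is $T$-flat. Because $H$ is versal for $h$ and $f$ is $\mathscr A$-equivalent to the fibre $h_t$ for small $t\neq 0$, the unfolding $H$, re-centred at such a parameter, is a stable versal unfolding of $f$; flatness of $M_y(H)$ is an open condition on $T$, hence it persists at that parameter. Therefore $f$ has the Cohen-Macaulay property, and Theorem~\ref{main2} yields Mond's conjecture for $f$.

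The step I expect to be the main obstacle is the second one: verifying that the restricted family $h_t$ is $\mathscr A$-equivalent to $f$ for $t\neq 0$ \emph{while remaining of corank $r$}. This is precisely where the corank-$r$ multiplicity hypothesis must be matched to the $\mathscr A$-determinacy degree of $f$, and where one must check that only the last $r+1$ components are touched so that the normal form, and with it the corank, is preserved along the whole family. By contrast, the transfer of flatness in the final step is routine once one invokes that a versal unfolding restricted near any base point is again versal and that flatness is an open condition.
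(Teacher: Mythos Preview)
Your proposal is correct and follows essentially the same approach as the paper's proof, which simply states that the argument is analogous to that of Theorem~\ref{reduction1} with the sole modification of putting both $f$ and $h$ in normal form before constructing the family $h_t$. You have made that modification explicit (perturbing only the last $r+1$ components so the corank is preserved) and spelled out the rescaling/$M$-jet argument and the flatness transfer through versality in more detail than the paper does, but the strategy is identical.
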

\begin{proof}
The proof is entirely analogous to the proof of the previous theorem. The only modification is that one needs to put $f$ and $h$ in normal form before constructing the family $h_t$.
\end{proof}


\begin{thebibliography}{99}




\bibitem{AGV} V.I. Arnold, S.M. Gusein-Zade and A.N. Varchenko.
Singularities of differentiable maps, Volume 2. Monodromy and asymptotics of integrals. Reprint of the 1988 hardback edition.
Modern Birkhäuser Classics. Boston, MA: Birkhäuser

\bibitem{BM} J.W. Bruce and W.L. Marar, Images and varieties. {\it J. Math. Sciences}, {\bf 82} (5), 3633--3641.

%

\bibitem{DM} J. Damon and D. Mond, 
$\mathscr A$-codimension and the vanishing topology of discriminants.
{\it Invent. Math.}, {\bf 106} (1991), no. 2, 217--242. 

\bibitem{MG} T. Gaffney, Properties of finitely determined germs, Phd thesis, Brandeis University, 1976.

\bibitem{singular}G.M. Greuel, G. Pfister, {\it A Singular
introduction to commutative algebra.} Springer, Berlin, 2008.

\bibitem{Hert} C. Hertling.
Frobenius manifolds and moduli spaces for singularities. 
Cambridge Tracts in Mathematics. 151. Cambridge University Press., 270 p, (2002).

\bibitem{Kulk} V. Kulikov. Mixed Hodge structures and singularities. 
Cambridge Tracts in Mathematics. 132. Cambridge University Press., 186 p., (1998).

%
%

\bibitem{JS} T. de Jong and D. van Straten, Disentanglements, Singularity theory and its applications, Part I (Coventry, 1988/1989), 93--121, Lecture Notes in Math., 1462, Springer, Berlin, 1991. 

\bibitem{MM} W.L. Marar and D. Mond, Multiple point schemes for corank $1$
maps, {\it J. London Math. Soc.}, {\bf 39} (1989), 553--567.

\bibitem{MatherII}  J.N. Mather, Stability of $C^\infty$ mappings. II. Infinitesimal stability implies stability. {\it Ann. of Math.} (2) {\bf 89} (1969), 254--291.

\bibitem{MatherIII} J.N. Mather, Stability of $C^\infty$ mappings. III. Finitely determined map-germs. {\it Inst. Hautes Études Sci. Publ. Math.} No. {\bf 35} 1968 279--308.

\bibitem{MatherVI} J.N. Mather, Stability of $C^\infty$ mappings. VI: The nice dimensions. Proceedings of Liverpool Singularities-Symposium, I (1969/70), pp. 207--253. Lecture Notes in Math., Vol. 192, Springer, Berlin, 1971.

\bibitem{Mond:vanishing} D. Mond, 
Vanishing cycles for analytic maps. Singularity theory and its applications, Part I (Coventry, 1988/1989), 221--234, Lecture Notes in Math., 1462, Springer, Berlin, 1991. 

\bibitem{Mond:bent wires} D. Mond, Looking at bent wires---$\mathscr A_e$-codimensions and the vanishing topology of parametrised curve singularities, {\it Math. Proc. Cambridge Phil. Soc.} {\bf 117}  no. 2 (1995), 213--222.

\bibitem{Mond: Sao Carlos} D. Mond, Lectures on singularities of mappings from $n$-space to $n+1$-space, Real and Complex Singularities São Carlos 2013, 1--89, Adv. Studies Pure Math., Math. Soc. Japan, 2015.

\bibitem{Mond:Edinburgh} D. Mond, Some open problems in the theory of singularities of mappings, {\it J. Sing.} (2015).

\bibitem{MP} D. Mond, R. Pellikaan, Fitting ideals and multiple points of
analytic mappings. Algebraic geometry and complex analysis (P\'atzcuaro, 1987), 107--161, Lecture
Notes in Math., 1414, Springer, Berlin, 1989.

\bibitem{P} R. Pellikaan, Hypersurface singularities and resolutions of Jacobi modules. Thesis, Rijksuniversiteit Utrech, 1985.

\bibitem{RP} R. Piene, Ideals associated to a desingularitation. Proc, Summer Meeting Copenhagen 1978, 503--517, Lecture Notes in Math., 732, Springer, Berlin, 1979.

\bibitem{Siersma} D. Siersma, 
Vanishing cycles and special fibres. Singularity theory and its applications, Part I (Coventry, 1988/1989), 292--301, Lecture Notes in Math., 1462, Springer, Berlin, 1991.

\bibitem{Siersma3} D. Siersma. 
The vanishing topology of non isolated singularities. Siersma, D. (ed.) et al., New developments in singularity theory. Proceedings of the NATO Advanced Study Institute on new developments in singularity theory, 
Cambridge, UK, July 31-August 11, 2000. Dordrecht: Kluwer Academic Publishers. NATO Sci. Ser. II, Math. Phys. Chem. 21, 447-472 (2001).
\bibitem{W} C.T.C. Wall, Finite determinacy of smooth
map germs, {\it Bull. London Math. Soc.}, {\bf 13} (1981), 481--539.


\end{thebibliography}
\end{document}